\title[Preservation of automatic continuity]{On preservation of automatic continuity}
\theoremstyle{definition}\newtheorem{theorem}{Theorem}
\theoremstyle{definition}\newtheorem*{A}{Theorem \ref{braidslender}}
\theoremstyle{definition}
\theoremstyle{definition}\newtheorem{bigtheorem}{Theorem}
\numberwithin{theorem}{section}
\theoremstyle{definition}\newtheorem{corollary}[theorem]{Corollary}
\theoremstyle{definition}
\theoremstyle{definition}\newtheorem{definition}[theorem]{Definition}
\theoremstyle{definition}\newtheorem{question}[theorem]{Question}
\theoremstyle{definition}
\theoremstyle{definition}\newtheorem{remark}[theorem]{Remark}
\theoremstyle{definition}
\theoremstyle{definition}\newtheorem{lemma}[theorem]{Lemma}
\theoremstyle{definition}
\theoremstyle{definition}
\theoremstyle{definition}
\theoremstyle{definition}\newtheorem{obs}[theorem]{Observation}
\newcommand{\W}{\mathcal{W}}
\newcommand{\HEG}{\operatorname{HEG}}
\newcommand{\Roots}{\operatorname{Roots}}
\begin{document}

\author[Samuel M. Corson]{Samuel M. Corson}
\address{Matematika Saila, UPV/EHU, Sarriena S/N, 48940, Leioa - Bizkaia, Spain}
\email{sammyc973@gmail.com}

\author[Ilya Kazachkov]{Ilya Kazachkov}
\address{Matematika Saila, UPV/EHU, Sarriena S/N, 48940, Leioa - Bizkaia, Spain}
\email{ilya.kazachkov@gmail.com}

\keywords{free group, first order theory, braid group, slender, root extraction, limit group}
\subjclass[2010]{Primary  54H11, 20E26 ; Secondary  20F36, 20E05}
\thanks{The authors are supported by ERC grant PCG-336983, Basque Government Grant IT974-16 and Spanish Government grant MTM2017-86802-P.}

\begin{abstract} 
	A group $G$ is called automatically continuous if  any homomorphism from a completely metrizable or locally compact Hausdorff group to $G$ has open kernel. In this paper, we study preservation of automatic continuity under group-theoretic constructions, focusing mainly on groups of size less than continuum. In particular, we consider group extensions and graph products. As a consequence, we establish automatic continuity of virtually poly-free groups, and hence of non-exceptional spherical Artin groups. 
	
	On the other hand, we show that if $G$ is automatically continuous, then so is any finitely generated residually $G$ group, hence, for instance, all finitely generated residually free groups are automatically continuous.
\end{abstract}

\maketitle

\begin{section}{Introduction}

The aim of this paper is to add to the increasing number of examples of automatically continuous groups. In this direction, on the one hand, we show that a number of prominent (classes of) groups are automatically continuous and, on the other hand, we establish several closure results for the class, notably, we show that it is closed under residual properties. In order to prove our results, we find new conditions that guarantee automatic continuity. An interesting feature of these conditions is that they are preserved under many group operations, such as extensions, graph products etc.

Define a group $G$ to be \emph{completely metrizable slender} (abbreviated \emph{cm-slender}) if every abstract homomorphism from a completely metrizable topological group to $G$ has open kernel.  Similarly define $G$ to be \emph{locally compact Hausdorff slender} (or \emph{lcH-slender}) if every abstract group homomorphism from a locally compact Hausdorff group to $G$ has open kernel.  Finally, $G$ is \emph{noncommutatively slender} (or \emph{n-slender}) if every abstract homomorphism from the Hawaiian earring group $\HEG$ to $G$ factors through projection to a canonical finite rank free subgroup (see Definition \ref{nslender}).  These notions imply a strong sort of automatic continuity- having an open kernel implies \emph{a fortiori} that a homomorphism is continuous.

Numerous classes of groups have recently been shown to satisfy automatic continuity and the arguments generally follow a diagonalization type argument, as used historically by Specker \cite{Sp}, Higman \cite{H}, Dudley \cite{Du}, Eda \cite{E1} and others.  Free (abelian) groups, Baumslag-Solitar groups, torsion-free word hyperbolic groups, and Thompson's group $F$ are among the currently known groups which are n-, cm-, and lcH-slender (these results are due to various authors, see the introduction to \cite{ConCor} for historical references).  The class of braid groups defined by Artin \cite{A} has heretofore not been known to satisfy automatic continuity conditions \cite[Question 5.3]{ConCor}.  We rectify the situation with the following:

\begin{bigtheorem}\label{braidslender}  
	Every torsion-free virtually poly-free group is n-, cm-, and lcH-slender. In particular, so is each spherical Artin group of type  $A_n$, $B_n$, $D_n$, $I_2(p)$, and $F_4$.
\end{bigtheorem}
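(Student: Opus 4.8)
The plan is to prove the three slenderness properties by a two-step strategy: first establish them for poly-free groups by induction on the length of a poly-free series, then bootstrap from poly-free to torsion-free virtually poly-free, and finally identify the listed spherical Artin groups as instances. The guiding principle is that each of n-, cm-, and lcH-slenderness should be preserved under group extensions in which both the kernel and the quotient enjoy the property; granting such extension-closure results (which the introduction advertises as a central feature of the conditions developed in the paper), the inductive step becomes routine.

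First I would treat the base case. A poly-free series exhibits the group as iterated extensions starting from a free group, so I need that free groups are n-, cm-, and lcH-slender; these are classical (attributed to Eda and others in the excerpt). For the inductive step, suppose $1 \to N \to G \to Q \to 1$ is exact with $N$ free and $Q$ poly-free of shorter length, both assumed slender by induction. Given a homomorphism $\varphi \colon H \to G$ from a completely metrizable (respectively locally compact Hausdorff, respectively Hawaiian earring) group, I would compose with $G \to Q$ to get a homomorphism into $Q$ whose kernel is open by the inductive hypothesis; restricting $\varphi$ to that open kernel lands in $N$, and slenderness of $N$ forces the kernel to be open in $H$ as well. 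Intersecting two open kernels yields an open kernel, giving the conclusion for $G$. The n-slender case requires the parallel statement that the class is closed under extensions at the level of the Hawaiian earring group's factorization property, which I would invoke in the same pattern.

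To pass from poly-free to torsion-free virtually poly-free, I would use that such a group $G$ contains a poly-free subgroup $G_0$ of finite index, and then appeal to the closure of each slenderness property under finite-index overgroups (for torsion-free $G$). Concretely, a homomorphism into $G$ has image meeting the finitely many cosets of $G_0$, and by a standard pigeonhole-plus-openness argument—using that the preimage of $G_0$ is a finite-index, hence open, subgroup of the domain once the topology is accounted for—one reduces to the poly-free case. The torsion-freeness hypothesis is what makes the virtually poly-free case behave well and prevents pathologies that finite torsion could introduce.

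Finally, for the spherical Artin groups of types $A_n$, $B_n$, $D_n$, $I_2(p)$, and $F_4$, I would cite the known structural fact that each of these is torsion-free and virtually poly-free (indeed the braid groups of type $A_n$ are poly-free by the classical fibration of configuration spaces and the Fadell–Neuwirth short exact sequences, and the remaining listed types reduce to this via finite-index or semidirect-product descriptions). Applying the main statement just proved then yields the corollary immediately. The main obstacle I anticipate is the extension-closure step, particularly for cm- and lcH-slenderness: the delicate point is that restricting $\varphi$ to the open kernel of $H \to Q$ preserves the completely metrizable (respectively locally compact Hausdorff) structure—an open subgroup of a completely metrizable group is again completely metrizable, and likewise for locally compact Hausdorff—so that the inductive hypothesis genuinely applies to the restriction. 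Verifying that the relevant topological category is closed under passing to open subgroups, and that the factorization in the n-slender case is compatible with this restriction, is where the real work lies.
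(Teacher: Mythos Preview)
Your extension-closure argument for the poly-free step is correct, and in fact more direct than the paper's route. The paper does not prove that n-, cm-, or lcH-slenderness is closed under extensions; instead it introduces an auxiliary algebraic condition $(\ast\ast)$ (roughly: for each nontrivial $g$ there are divisibility constraints making the infinite system $y_m = g^j y_{m+1}^{k_m}$ unsolvable), shows $(\ast\ast)$ implies all three slenderness notions, and then proves $(\ast\ast)$ is preserved under extensions. Your argument---compose with $G\to Q$, get an open kernel $K$, note that $K$ is again completely metrizable (resp.\ locally compact Hausdorff, resp.\ isomorphic to $\HEG$), and apply slenderness of $N$ to $\varphi|_K$---works cleanly and bypasses $(\ast\ast)$ entirely for this step.

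The genuine gap is in your ``virtually'' step. You write that $\varphi^{-1}(G_0)$ is ``a finite-index, hence open, subgroup of the domain once the topology is accounted for,'' but this inference is false: finite-index subgroups of completely metrizable (or locally compact Hausdorff) groups need not be open. For instance, $(\mathbb{Z}/2\mathbb{Z})^\omega$ has index-$2$ subgroups arising as kernels of discontinuous $\mathbb{Z}/2$-linear functionals, and these are dense rather than open. So you cannot restrict $\varphi$ to $\varphi^{-1}(G_0)$ and invoke cm-slenderness of $G_0$, because that preimage may fail to be completely metrizable. Your extension argument does not apply either, since the finite quotient $G/G_0$ has torsion and is therefore not slender in any of the three senses.

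This is exactly why the paper works with $(\ast\ast)$ rather than with slenderness directly. Lemma~\ref{finiteextension} shows that $(\ast\ast)$ passes from $N$ to a torsion-free $G$ with $G/N$ of bounded exponent $d$: the trick is to replace the exponent $j$ in the definition of $(\ast\ast)$ by $j_N d$, which forces $g^j\in N$ for every $g\in G$, and to arrange that each $k_m$ is divisible by $d$ so that any putative solution automatically lies in $N$. This manoeuvre is purely algebraic and sidesteps the topological difficulty you ran into. Absent an independent proof that torsion-free finite-index overgroups of cm- (or lcH-) slender groups are again slender, your outline does not close.
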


The above notions of slenderness are intuitively a second order logical property.  Indeed, for any nontrivial group $G$ there is a countable group $H$ with the same first order theory as $G$ such that $H$ is not n-, cm-, or lcH-slender (see Theorem \ref{notsofast}).  It is therefore a bit surprising that certain first order conditions combined with finite generation allow us to conclude that a group is n-, cm- and lcH-slender.  For example, any torsion-free abelian group which is also finitely generated is isomorphic to a finite rank free abelian group $\mathbb{Z}^n$, and such groups are classically know to be n-, cm-, and lcH-slender \cite{Du}.  It is also fairly easy to see more generally that a torsion-free nilpotent group which is finitely generated is n-, cm-, and lcH-slender \cite{Con}.

The first order theory of nonabelian free groups has been a subject of thorough study over the last several decades and it is therefore natural to ask what might be the situation with the finitely generated groups whose first order theory is that of free groups.  Importantly, free groups are n-, cm- and lcH-slender \cite{Du}.

For any equationally Noetherian group $G$, in particular for the free group, the class of groups which are residually (or fully residually) $G$ play an important role in studying its first order theory: they form the quasi-variety of $G$, that is the set of all groups which satisfy the same quasi-identities as $G$ (satisfy all the universal sentences satisfied by $G$, correspondingly), are coordinate groups of (irreducible) varietes over $G$ etc. We show the following:

\begin{bigtheorem}\label{residualslender}  For a group $G$ the following hold:
\begin{enumerate} 
\item  If $|G|<2^{\aleph_0}$ then $G$ is n-slender if and only if $G$ is residually n-slender.

\item  If $|G|<2^{\aleph_0}$ then $G$ is cm-slender if and only if $G$ is residually cm-slender.

\item  $G$ is lcH-slender if and only if $G$ is residually lcH-slender.
\end{enumerate}
\end{bigtheorem}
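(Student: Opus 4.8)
The forward implications are immediate: taking $H=G$ with the identity homomorphism witnesses that any n- (resp.\ cm-, lcH-) slender group is residually so. I treat the three converses, and begin by isolating a common engine. Let $\{p_n:\HEG\to F_n\}$ be the canonical projections of Definition \ref{nslender}, with kernels $K_n=\ker p_n$, so that $K_1\supseteq K_2\supseteq\cdots$ and $\varphi$ factors through a canonical finite rank free subgroup precisely when $K_n\subseteq\ker\varphi$ for some $n$. Suppose $G$ is residually n-slender and let $\varphi:\HEG\to G$ be given. For each nontrivial $x\in G$ fix $\phi_x:G\to H_x$ with $H_x$ n-slender and $\phi_x(x)\ne 1$, so that $\bigcap_{x\ne 1}\ker\phi_x=\{1\}$. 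As $H_x$ is n-slender, $\phi_x\circ\varphi$ factors through $p_{n(x)}$ for some least $n(x)$, i.e.\ $K_{n(x)}\subseteq\ker(\phi_x\circ\varphi)$. Taking preimages,
\[
\ker\varphi=\varphi^{-1}\Big(\bigcap_{x\ne1}\ker\phi_x\Big)=\bigcap_{x\ne1}\varphi^{-1}(\ker\phi_x)\supseteq\bigcap_{x\ne1}K_{n(x)}.
\]
If the levels $n(x)$ are bounded, say by $N$, then $\bigcap_x K_{n(x)}=K_N\subseteq\ker\varphi$ and $\varphi$ factors through $p_N$, as desired. Identical bookkeeping handles the cm case, with $p_n$ replaced by the datum recording how small an open ball the kernel of a map to a cm-slender group contains, and ``level'' read as the radius index.

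Thus everything reduces to ruling out unbounded levels, and here the hypothesis $|G|<2^{\aleph_0}$ and the main difficulty enter. Assume $\sup_x n(x)=\infty$ and choose $x_1,x_2,\dots$ with $n(x_j)$ strictly increasing. Minimality of $n(x_j)$ gives $\phi_{x_j}\varphi(K_{n(x_j)-1})\ne\{1\}$; writing $K_m$ as the normal closure of the tail subgroup $T_m$, I can select a tail element supported on circles above $n(x_j)-1$ on which $\phi_{x_j}\varphi$ is nontrivial, and truncating its support (which does not affect its image under $\phi_{x_j}\varphi$, as that map factors through $p_{n(x_j)}$) I arrange elements $t_j\in\HEG$ with pairwise disjoint finite supports tending to infinity and $\phi_{x_j}\varphi(t_j)\ne 1$. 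Because the supports tend to infinity, for every $S\subseteq\mathbb N$ the product $w_S=\prod_{j\in S}t_j$ is a well-defined element of $\HEG$. The assignment $S\mapsto\varphi(w_S)$ has domain of size $2^{\aleph_0}$ and codomain of size $<2^{\aleph_0}$, so there are $S\ne S'$ with $\varphi(w_S)=\varphi(w_{S'})$. Let $j_0=\min(S\triangle S')$, say $j_0\in S\setminus S'$, so $S$ and $S'$ agree below $j_0$. Applying $\phi_{x_{j_0}}\varphi$, which factors through $p_{n(x_{j_0})}$ and hence annihilates every $t_j$ with $j>j_0$ (supported above $n(x_{j_0})$), both sides collapse to the common word in the $t_j$ with $j<j_0$, multiplied on the right by $\phi_{x_{j_0}}\varphi(t_{j_0})$ on the $S$–side only; cancelling yields $\phi_{x_{j_0}}\varphi(t_{j_0})=1$, contradicting the choice of $t_{j_0}$. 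I expect this diagonalization---producing the disjointly supported $t_j$ and running the minimal-index cancellation---to be the crux; the cm case is parallel, with $w_S$ the product of a null sequence in the completely metrizable source, convergent by completeness.

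For lcH-slenderness no cardinality restriction is available, and I would argue differently. Given an lcH group $L$ and $\psi:L\to G$ with $G$ residually lcH-slender, set $U_x=\ker(\phi_x\circ\psi)$, which is open in $L$ since $H_x$ is lcH-slender, so $\ker\psi=\bigcap_{x\ne1}U_x$. Since a subgroup containing an open subgroup is open and I may freely replace $L$ by an open subgroup, I assume $L$ is compactly generated. By the structure theory of compactly generated locally compact groups (Gleason--Yamabe) there is a compact normal $K\trianglelefteq L$ with $L/K$ a Lie group. Each $\phi_x\psi|_K$ has open, hence finite-index, kernel, so $\phi_x\psi(K)$ is finite; as lcH-slender groups are torsion-free this forces $\phi_x\psi(K)=\{1\}$ for all $x$, whence $\psi(K)\subseteq\bigcap_x\ker\phi_x=\{1\}$ and $\psi$ factors through $\bar\psi:L/K\to G$ from a Lie group. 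In a Lie group every open subgroup contains the identity component $(L/K)^{0}$, which is itself open; applying this to each $\ker(\phi_x\bar\psi)$ gives $\ker\bar\psi=\bigcap_x\ker(\phi_x\bar\psi)\supseteq (L/K)^{0}$, so $\ker\bar\psi$ is open and therefore so is $\ker\psi$. This is precisely where the connectedness supplied by the Lie quotient replaces the cardinality argument used in (1) and (2).
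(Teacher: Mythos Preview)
Your argument is correct, but it takes a different route from the paper's in all three parts.

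For (1) and (2), the paper does not run a direct diagonalization. Instead it invokes Lemma~\ref{eventualconstant} (from \cite{CaCon}, \cite{ConCor}) as a black box: for $|G|<2^{\aleph_0}$ the images $\phi(\HEG^{M})$ (respectively $\phi(V)$ over neighborhoods $V$) eventually stabilize. One then picks a \emph{single} witness $\psi:G\to L$ nontrivial on this stable image and observes that $\psi\circ\phi$ violates slenderness of $L$. Your approach instead indexes witnesses $\phi_x$ by \emph{all} nontrivial $x\in G$, assigns levels $n(x)$, and in the unbounded case builds the disjointly-supported $t_j$ and runs the $S\mapsto w_S$ counting argument. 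This is essentially the content of Lemma~\ref{eventualconstant}'s proof inlined and interleaved with its application; it yields a more self-contained argument at the cost of re-deriving the stabilization. Your treatment of (2) is correct but genuinely sketchy: one must interleave the choice of $x_j$ with the choice of $h_j$ (first pick $h_j$ in a ball small enough for the Cauchy condition across all $2^{j-1}$ prior partial products and small enough to lie in $\ker(\phi_{x_{j'}}\varphi)$ for $j'<j$, \emph{then} choose $x_j$ with level high enough that this ball still meets the complement of $\ker(\phi_{x_j}\varphi)$), and one must note that $\phi_{x_{j_0}}\varphi$ is locally constant (open kernel) so that it commutes with the limit defining $w_S$. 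These are routine but not ``identical bookkeeping.''

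For (3), the paper first quotients by the (closed) kernel to reduce to injective $\phi$, then uses the Cleary--Morris theorem that $H$ is \emph{homeomorphic} to $\mathbb{R}^n\times K\times D$; triviality of compact subgroups forces $K=1$, and connectedness of $\mathbb{R}^n\times\{y\}$ together with residual lcH-slenderness forces $n=0$. Your argument via Gleason--Yamabe is a genuine alternative: you pass to an open subgroup admitting a compact normal $K$ with Lie quotient, kill $K$ by the same torsion-free argument, and then use that open subgroups of a Lie group contain the (open) identity component. Both arguments ultimately exploit the same two facts---compact subgroups map trivially because lcH-slender targets are torsion-free, and connected pieces map trivially because the compositions have open kernel---but package them through different structure theorems. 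Your route avoids the reduction to injective $\phi$ and the less-familiar Cleary--Morris result, at the price of invoking the deeper Gleason--Yamabe machinery.
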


Since limit groups are (fully) residually free and free groups are n-, cm-, and lcH-slender it immediately follows that limit groups are as well. Thus, any finitely generated group with the same first order theory or, more generally universal theory of the free group or indeed a group satisfying all the quasi-identities satisfied by a non-abelian free group has all these slenderness conditions. On the other hand, in contrast with the above remarks, we show in Theorem \ref{notsofast} that given n-, cm- or lcH-slender group $G$ there always exists a countable group which is elementarily equivalent to $G$ but is not n-, cm- or lcH-slender. Therefore, Theorems \ref{residualslender} and \ref{notsofast} demonstrate, in particular, that while there is a very strong connection between finitely generated models of the first-order theory of free groups and residually free groups, this connection weakens if one is to study infinitely generated models. 

We note that infinitary logic provides sufficient tools to guarantee slenderness without restricting the number of generators.  For example, the property of being an abelian n-slender (respectively abelian lcH-slender) group is $L_{\infty \omega_1}$ (resp. $L_{\infty \omega}$) definable.  This follows from \cite[Theorem 3.3]{E1}, \cite[Theorem C, (1)]{Cor2} and \cite{ShKo}.  For the nonabelian case, any group which is $L_{\infty \omega_1}$ equivalent to a free group is n-, cm-, and lcH-slender (using \cite[Theorem 0.2]{Me} with \cite[Theorem A]{Cor2}).

We give some background in Section \ref{Background}, prove Theorem \ref{braidslender} in Section \ref{TheoremA}, and prove Theorem \ref{residualslender} in Section \ref{TheoremB}.
\end{section}

\begin{section}{Some background}\label{Background}

We give the definition of the Hawaiian earring group and state some relevant facts.  Let $\{a_n^{\pm 1}\}_{n\in \omega}$ be a countably infinite set where each element has a formal inverse.  A \emph{word} $W$ is a function whose domain is a totally ordered set $\overline{W}$ and whose codomain is $\{a_n^{\pm 1}\}_{n\in \omega}$ such that for each $n\in \omega$ the set $\{i\in \overline{W}\mid W(i) \in \{a_n^{\pm 1}\}\}$ is finite.  Clearly the domain of any word is countable.  We understand two words $W_0$ and $W_1$ to be the same, and write $W_0 \equiv W_1$, if there exists an order isomorphism $\iota: \overline{W_0} \rightarrow \overline{W_1}$ such that $W_1(\iota(i)) = W_0(i)$.  Let $\W$ denote the set of $\equiv$ equivalence classes.  For each $m\in \omega$ we define the function $p_m: \W \rightarrow \W$ by the restriction $p_m(W) = W\upharpoonright\{i\in \overline{W}\mid W(i) \in \{a_{k}^{\pm 1}\}_{k=0}^{m-1}\}$.  Clearly $p_k\circ p_m =p_k$ whenever $k\geq m$.  The word $p_m(W)$ has finite domain, and we write $W_0 \sim W_1$ if for every $m\in \omega$ we have $p_m(W_0)$ equal to $p_m(W_1)$ as elements in the free group over $\{a_{k}\}_{k=0}^{m-1}$.  Write $[W]$ for the equivalence class of $W$ under $\sim$.  

Given two words $W_0$ and $W_1$ we define their concatenation $W_0W_1$ to be the word whose domain is the disjoint union of $\overline{W_0}$ and $\overline{W_1}$ under the order extending that of the two subsets which places elements of $\overline{W_0}$ below those of $\overline{W_1}$, and such that $W_0W_1(i) = \begin{cases}W_0(i)$ if $i\in \overline{W_0}\\W_1(i)$ if $i\in \overline{W_1}\end{cases}$.  Given $W\in \W$ we let $W^{-1}$ be the word whose domain is $\overline{W}$ under the reverse order and such that $W^{-1}(i) = (W(i))^{-1}$.

The set $\W/\sim$ has a group structure defined by letting $[W_0][W_1] = [W_0W_1]$ and $[W]^{-1} = [W^{-1}]$.  The identity element is the $\sim$ class of the empty word $E$.  This group is isomorphic to the fundamental group of the Hawaiian earring and we denote it $\HEG$.  For each $m\in \omega$ the word map $p_m$ defines a retraction homomorphism, also denoted $p_m$, which takes $\HEG$ to a subgroup which is isomorphic to the free group on $\{a_k\}_{k=0}^{m-1}$, which we denote $\HEG_m$.  Again, $p_k\circ p_m = p_k$ whenever $k \geq m$.  The set of all elements of $\HEG$ which have a representative using no letters in $\{a_k^{\pm 1}\}_{k=0}^{m-1}$ is also a retract subgroup, which we denote $\HEG^m$.  There is a natural isomorphism $\HEG \simeq \HEG_m * \HEG^m$.

\begin{definition}\label{nslender}  
	A group $G$ is \emph{noncommutatively slender}, or \emph{n-slender}, if for every homomorphism $\phi: \HEG \rightarrow G$ there exists $m\in \omega$ such that $\phi\circ p_m = \phi$.
\end{definition}

\noindent Equivalently, $G$ is n-slender if for every homomorphism $\phi: \HEG\rightarrow G$ there exists $m\in \omega$ such that $\HEG^m \leq \ker(\phi)$.

\begin{obs}
	The n-slender groups do not contain torsion or $\mathbb{Q}$ as a subgroup (\cite[Theorem 3.3]{E1}, \cite{Sas}).  The same is true for cm- and lcH-slender groups.  The additive group on the real numbers $\mathbb{R}$ has a topology which is both completely metrizable and locally compact Hausdorff and there exists a discontinuous map from $\mathbb{R}$ to $\mathbb{Q}$ constructed by selecting a Hamel basis.  Similarly, a group with torsion includes a cyclic subgroup of prime order $p$ and one can again construct a discontinuous homomorphism from the compact metrizable group $\prod_{\omega}(\mathbb{Z}/p\mathbb{Z})$ to $\mathbb{Z}/p\mathbb{Z}$ by a vector space argument.
\end{obs}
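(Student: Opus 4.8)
The plan is to argue each clause by contraposition: to show that a group with one of these slenderness properties omits a subgroup $K$, I would assume $K \leq G$ and manufacture a single homomorphism from an admissible source into $G$ that violates the defining property, the source being chosen to match the property in question. For the cm- and lcH-slender cases the admissible sources are completely metrizable (respectively locally compact Hausdorff) groups and the violation is a non-open kernel; for the n-slender case the source is $\HEG$ and the violation is failure to factor through any $p_m$. In each case the offending map is $\iota \circ \psi$, where $\psi$ maps the source onto $K$ and $\iota : K \hookrightarrow G$ is the inclusion, so it suffices to build $\psi$ at the level of $K$ alone.

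For the $\mathbb{Q}$ clause in the metrizable and locally compact cases I would take the source to be $\mathbb{R}$ with its Euclidean topology, which is simultaneously completely metrizable, locally compact Hausdorff, and connected. Viewing $\mathbb{R}$ as a vector space over $\mathbb{Q}$, I fix a Hamel basis, single out one basis vector $b_0$, and let $\psi : \mathbb{R} \to \mathbb{Q}$ read off the $b_0$-coordinate. This $\psi$ is additive and surjective onto $\mathbb{Q}$, and its kernel is a proper subgroup. Because $\mathbb{R}$ is connected its only open subgroup is $\mathbb{R}$ itself (an open subgroup is also closed), so $\ker \psi$ is not open; composing with $\mathbb{Q} \hookrightarrow G$ contradicts cm- and lcH-slenderness.

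For the torsion clause I would use that a group with torsion contains a copy of $\mathbb{Z}/p\mathbb{Z}$ for some prime $p$, and take the source to be $P = \prod_\omega \mathbb{Z}/p\mathbb{Z}$ with the product topology, which is compact metrizable and hence both completely metrizable and locally compact Hausdorff. The key observation is that an open subgroup of $P$ contains a tail $\prod_{i \geq n} \mathbb{Z}/p\mathbb{Z}$, so a homomorphism $P \to \mathbb{Z}/p\mathbb{Z}$ has open kernel exactly when it is a finite $\mathbb{F}_p$-linear combination of coordinate functionals. Regarding $P$ as an $\mathbb{F}_p$-vector space and extending a basis of the subspace $\bigoplus_\omega \mathbb{Z}/p\mathbb{Z}$ to a Hamel basis of $P$, I extend the \emph{sum} functional $e_i \mapsto 1$ to a linear functional $\psi : P \to \mathbb{Z}/p\mathbb{Z}$; since $\psi(e_i) = 1$ for every $i$, no finite set of coordinates determines $\psi$, so $\ker \psi$ is not open. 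Composing with $\mathbb{Z}/p\mathbb{Z} \hookrightarrow G$ again yields the contradiction.

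The n-slender clauses follow the same template with $\HEG$ in place of $\mathbb{R}$ or $P$, constructing an infinitary homomorphism from $\HEG$ onto $\mathbb{Q}$ or $\mathbb{Z}/p\mathbb{Z}$ that survives no projection $p_m$; here I would lean on the cited work of Eda and Sasada. I expect the genuine content to be verifying that the test maps really have non-open kernel: for $\mathbb{R}$ this is immediate from connectedness, but for $P$ it rests on the characterization of open subgroups as those containing a tail, and in both constructions the discontinuous functional exists only by appeal to a Hamel basis, i.e.\ the axiom of choice. The reduction to a single bad subgroup and the passage from $K$ to $G$ by composition are then routine.
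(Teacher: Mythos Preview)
Your proposal is correct and follows essentially the same approach as the paper: the observation itself sketches its own proof, citing Eda and S\c asiada for the n-slender case and indicating the Hamel basis construction from $\mathbb{R}$ for $\mathbb{Q}$ and the $\mathbb{F}_p$-vector-space argument on $\prod_\omega(\mathbb{Z}/p\mathbb{Z})$ for torsion in the cm- and lcH-slender cases. You have simply fleshed out the details (connectedness of $\mathbb{R}$ to see non-openness of the kernel, the tail characterization of open subgroups in the product) that the paper leaves implicit.
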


\begin{obs}  
	We observe that $\HEG$ is itself both lcH- and cm-slender.  This is an easy consequence of the following theorems (due to Morris and Nickolas \cite{MoN}, and  Slutsky \cite{Sl}, respectively):

\begin{theorem}\label{MorrisandNickolas}  
	Suppose that $\phi: H \rightarrow *_{i\in I}G_i$ is an abstract homomorphism with $H$ a locally compact Hausdorff group (where $\rightarrow *_{i\in I}G_i$ denotes the free product of the collection of groups $\{G_i\}_{i\in I}$).  Then either $\ker(\phi)$ is open or $\phi(H)$ lies entirely inside a conjugate of one of the $G_i$.
\end{theorem}

\begin{theorem}\label{Slutsky}  
	Suppose that $\phi: H \rightarrow *_{i\in I}G_i$ is an abstract homomorphism with $H$ a completely metrizable group.  Then either $\ker(\phi)$ is open or $\phi(H)$ lies entirely inside a conjugate of one of the $G_i$.
\end{theorem}

\noindent To see that $\HEG$ is lcH-slender we suppose for contradiction that $\phi: H \rightarrow \HEG$ has locally compact Hausdorff domain and $\ker(\phi)$ is not open.  For each $m\in \omega$ we have $\HEG \simeq \HEG_m *\HEG^m$.  Then by Theorem \ref{MorrisandNickolas} we know that $\phi(H)$ is contained in a conjugate of $\HEG_m$ or a conjugate of $\HEG^m$.  Since $\HEG_m$ is lcH-slender (see \cite{Du}) it must be that $\phi(H)$ is actually contained in a conjugate of $\HEG^m$.  Then $p_m \circ \phi: H \rightarrow \HEG$ is trivial for each $m$.  Since for each nontrivial $W\in \HEG$ we have $p_m(W) \neq 1$ for some $m\in \omega$, it follows that $\phi$ is the trivial homomorphism, so $\phi$ had open kernel after all.  Theorem \ref{Slutsky} is used in the cm-slender case.
\end{obs}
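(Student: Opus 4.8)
The plan is to establish both halves of the observation by a single argument by contradiction, the only difference being which of the two cited dichotomy theorems is invoked. Suppose $\phi\colon H\to\HEG$ is a homomorphism whose kernel is \emph{not} open, where $H$ is locally compact Hausdorff in the lcH case and completely metrizable in the cm case. For each $m\in\omega$ I would fix the free product decomposition $\HEG\simeq\HEG_m*\HEG^m$ recorded above and feed it into Theorem \ref{MorrisandNickolas} (respectively Theorem \ref{Slutsky}) as a black box. Since $\ker(\phi)$ is assumed non-open, the conclusion of that theorem gives, for every $m$, the dichotomy that $\phi(H)$ lies in a conjugate of $\HEG_m$ or in a conjugate of $\HEG^m$.

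The crux is to eliminate the first alternative. Here I would use the base case that a finite rank free group is both lcH- and cm-slender, which is Dudley's theorem \cite{Du}, together with the observation that conjugation by a fixed element of $\HEG$ is an automorphism and hence preserves kernels. Concretely, if $\phi(H)\subseteq g\HEG_m g^{-1}$ for some $g$, then post-composing $\phi$ with conjugation by $g^{-1}$ yields a homomorphism $H\to\HEG_m$ with exactly the same kernel as $\phi$; slenderness of the finite rank free group $\HEG_m$ then forces this kernel to be open, contradicting the standing assumption. Consequently, for each $m$ we must instead have $\phi(H)\subseteq g_m\HEG^m g_m^{-1}$ for some $g_m\in\HEG$.

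To finish, note that $p_m$ is a retraction that annihilates $\HEG^m$, so $p_m(g_m\HEG^m g_m^{-1})=\{1\}$ and therefore $p_m\circ\phi$ is trivial for every $m\in\omega$. Since any nontrivial $W\in\HEG$ has $p_m(W)\neq 1$ for some $m$, the family $\{p_m\}$ separates points, so a homomorphism all of whose projections vanish must itself be trivial; hence $\phi$ is trivial and $\ker(\phi)=H$ is open, the desired contradiction. The cm-slender case is settled by the identical argument, substituting Theorem \ref{Slutsky} for Theorem \ref{MorrisandNickolas} and the cm-slenderness half of Dudley's result. The step I expect to be the main obstacle is precisely the elimination of the conjugate-of-$\HEG_m$ branch: one has to notice that the slenderness conclusion is invariant under composition with an inner automorphism, so that the finite rank base case still applies even though the dichotomy only locates $\phi(H)$ inside a \emph{conjugate} of $\HEG_m$; once this is observed, the passage to triviality through the separating projections is routine.
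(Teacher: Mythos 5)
Your proposal is correct and follows essentially the same argument as the paper: the same decomposition $\HEG\simeq\HEG_m*\HEG^m$ fed into the Morris--Nickolas (resp.\ Slutsky) dichotomy, elimination of the $\HEG_m$ branch via Dudley's slenderness of finite rank free groups, and triviality of $\phi$ via the separating retractions $p_m$. The two details you flag as potential obstacles --- that conjugation preserves the kernel so the finite rank base case applies to a conjugate of $\HEG_m$, and that $p_m$, being a homomorphism, kills conjugates of $\HEG^m$ --- are exactly the steps the paper leaves implicit, and you verify them correctly.
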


\end{section}

\begin{section}{Theorem \ref{braidslender}}\label{TheoremA}

We shall be concerned with solving countable systems of equations in a group.  More particularly, given a group $G$ and sequence $\{a_m\}_{m\in \omega}$ of elements of $G$ and a sequence $\{k_m\}_{m\in \omega}$ of positive natural numbers we consider the system of equations 
$$
y_m = a_my_{m+1}^{k_m}.
$$

\begin{definition} We call a sequence $\{b_m\}_{m\in \omega}$ in $G$ for which $b_m = a_mb_{m+1}^{k_m}$ for all $m\in \omega$ a \emph{solution} to this set of equations.  Similarly for a finite set of equations $\{y_m = a_my_{m+1}^{k_m}\}_{m=0}^{M}$ we call a finite sequence $b_0, \ldots, b_{M+1}$ in $G$ a \emph{solution} provided $b_m = a_mb_{m+1}^{k_m}$ for all $0\leq m\leq M$.
\end{definition}

We shall rely on the following theorem which is rather lengthy to state.

\begin{theorem}\label{Eventualconstantsolutions} The following statements hold:

\begin{enumerate}  \item Suppose $\phi: \HEG \rightarrow G$ is a homomorphism for which there exists $M\in \omega$ such that $M'\geq M$ implies $\phi(\HEG^{M'}) = \phi(\HEG^M)$.  Then given any sequence $\{a_m\}_{m\in \omega}$ in $\phi(\HEG^M)$ and sequence $\{k_m\}_{m\in \omega}$ of positive natural numbers there exists a solution to the system of equations
$$
y_m = a_my_{m+1}^{k_m}
$$

\item Suppose $\phi: H \rightarrow G$ is a homomorphism with $H$ completely metrizable and such that for some neighborhood $V$ of $1_H$ every subneighborhood $Z \subseteq V$ of $1_H$ satisfies $\phi(Z) = \phi(V)$.  Then given any sequence $\{a_m\}_{m\in \omega}$ in $\phi(V)$ and sequence $\{k_m\}_{m\in\omega}$ of positive natural numbers there exists a solution to the defined system of equations $y_m = a_my_{m+1}^{k_m}$ in the group $\phi(V)$.

\item  Suppose $\phi: H \rightarrow G$ is a homomorphism with $H$ locally compact Hausdorff and such that for some neighborhood $V$ of $1_H$ every subneighborhood $Z \subseteq V$ of $1_H$ satisfies $\phi(Z) = \phi(V)$.  Then given any sequence $\{a_m\}_{m\in \omega}$ in $\phi(V)$ and sequence $\{k_m\}_{m\in \omega}$ of positive natural numbers there exists an element $b_0\in \phi(V)$ such that for each $M\in \omega$ there exist $b_1, \ldots, b_{M+1}$ for which $\{b_m\}_{m=0}^{M+1}$ solves the finite system of equations $\{y_m= a_my_{m+1}^{k_m}\}_{m=0}^M$.
\end{enumerate}
\end{theorem}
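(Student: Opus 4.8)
The plan is to handle all three parts through one construction: realize the formal infinite nested product $a_0(a_1(a_2\cdots)^{k_1})^{k_0}$ obtained by iterating $y_m=a_my_{m+1}^{k_m}$, adapting the meaning of ``the product exists'' to each ambient setting. For (1), I would first use the stabilization hypothesis to lift each $a_m$: since $a_m\in\phi(\HEG^M)=\phi(\HEG^{M_m})$ for every $M_m\ge M$, choose $c_m\in\HEG^{M_m}$ with $\phi(c_m)=a_m$ and with $M_m\nearrow\infty$. I would then build a single word $W\in\W$ representing $w_0$ whose ordered domain is organized as a tree: a block spelling $c_0$, followed by $k_0$ copies of the $w_1$-block, each of which spells $c_1$ followed by $k_1$ copies of the $w_2$-block, and so on, ordered so that within each $w_m$-block the $c_m$-letters precede the copies of $w_{m+1}$; let $w_m\in\HEG$ be the element represented by the sub-block rooted at level $m$. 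The key point to verify is that $W$ really is a word: a fixed letter $a_n$ occurs only in the $c_m$-blocks with $M_m\le n$, of which there are finitely many, and the block $c_m$ occurs with finite multiplicity $k_0k_1\cdots k_{m-1}$, so $a_n$ occurs finitely often. By construction $w_m=c_mw_{m+1}^{k_m}$ holds in $\HEG$, checked by comparing $p_j$ of both sides in $\HEG_j$ for each $j$, which reduces to a finite identity among the finitely many levels with $M_m<j$. Applying $\phi$ gives $b_m:=\phi(w_m)=a_m\phi(w_{m+1})^{k_m}$, the desired solution.

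For (2) I would carry out the same nesting directly inside $H$, replacing combinatorial finiteness by metric convergence. The hypothesis that $\phi(Z)=\phi(V)$ for every subneighborhood $Z\subseteq V$ lets me lift each $a_m$ to $v_m\in V$ inside a prescribed arbitrarily small neighborhood, and it also forces $\phi(V)$ to be a subgroup of $G$ (pick a symmetric $Z$ with $ZZ\subseteq V$). Fix a complete compatible metric $d$. Define truncations by $h_{N+1}^{(N)}=1_H$ and $h_m^{(N)}=v_m(h_{m+1}^{(N)})^{k_m}$ downward, so that $h_0^{(N)}=G_N(1_H)$ and $h_0^{(N+1)}=G_N(v_{N+1})$ for the fixed continuous map $G_N(x)=v_0(v_1(\cdots(v_Nx^{k_N})\cdots)^{k_1})^{k_0}$ assembled from the already-chosen $v_0,\dots,v_N$. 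Choosing $v_{N+1}$ close enough to $1_H$, continuity at $1_H$ of the finitely many maps $x\mapsto v_m(v_{m+1}(\cdots)^{k_m})$ forces $d(h_m^{(N)},h_m^{(N+1)})<2^{-N}$ simultaneously for all $m\le N$. Then each $\{h_m^{(N)}\}_N$ is $d$-Cauchy, hence converges to some $h_m\in H$; passing to the limit in $h_m^{(N)}=v_m(h_{m+1}^{(N)})^{k_m}$ and using continuity of the group operations yields $h_m=v_mh_{m+1}^{k_m}$. Keeping all truncations within a closed neighborhood of $1_H$ contained in $V$ ensures the limits $h_m$, and hence the $b_m:=\phi(h_m)$, lie in $\phi(V)$.

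For (3) the same truncations are used, but completeness is replaced by local compactness, which only yields the weaker finite-completion statement. I would fix a compact symmetric neighborhood $K_0\subseteq V$ and recursively choose neighborhoods $U_m,W_m\ni 1_H$ with $W_0\subseteq K_0$, $W_{m+1}\subseteq W_m$, and $U_mW_{m+1}^{k_m}\subseteq W_m$ (using continuity of $(v,x)\mapsto vx^{k_m}$ at $(1_H,1_H)$); lifting $a_m$ to $v_m\in U_m\subseteq V$ then forces, by downward induction, $h_m^{(N)}\in W_m\subseteq K_0$ for all $m\le N+1$. Compactness of $K_0$ gives a subnet of $\{h_0^{(N)}\}$ converging to some $h_0\in K_0$; set $b_0:=\phi(h_0)$. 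Given $M$, pass to a further subnet along which $h_{M+1}^{(N)}$ also converges (again to a point of $K_0$) to a limit $t_{M+1}$; since $h_0^{(N)}=G_M(h_{M+1}^{(N)})$ for a fixed continuous $G_M$ and all $N>M$, continuity gives $h_0=G_M(t_{M+1})$, and back-substitution produces $h_1,\dots,h_{M+1}$ with $\{h_m\}_{m=0}^{M+1}$ solving the finite system. Applying $\phi$ yields the required $b_1,\dots,b_{M+1}$ completing the fixed $b_0$.

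I expect the main obstacle to be the convergence bookkeeping in (2) and (3). In (2) the recursive choice of $v_{N+1}$ must defeat the amplification of perturbations produced by the exponents $k_m$ under nested conjugation, which is exactly why the continuity estimates are taken against the already-assembled maps $G_N$ and enforced for all $m\le N$ at once. In (3) the delicate points are the recursive neighborhood choice forcing every truncation into the single compact set $K_0$ and the subnet extraction; the latter is also precisely where the conclusion must weaken, since a net in a possibly non-metrizable compact set cannot be diagonalized to make all tails $h_{m}^{(N)}$ converge simultaneously, so one obtains only finite solutions sharing the common value $b_0$ rather than a full infinite solution.
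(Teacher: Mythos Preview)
Your proposal is correct and follows essentially the same strategy as the paper: lift each $a_m$ to a preimage in a successively deeper (respectively smaller) set, form the nested products, and appeal to the ambient structure (word combinatorics in $\HEG$, completeness, compactness) to realize the infinite limit. The only cosmetic difference is in part (3), where the paper avoids subnets by directly taking $y_0$ in the intersection of the nested compacta $K_M = h_0(h_1(\cdots h_M(\overline{V_{M+1}})^{k_M}\cdots)^{k_1})^{k_0}$ and then, for each $M$, reading off $y_{M+1}\in\overline{V_{M+1}}$ from the membership $y_0\in K_M$; your subnet limit $h_0$ is precisely such a point of $\bigcap_M K_M$, so the two arguments coincide.
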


\begin{proof} (1)  Suppose $\phi: \HEG\rightarrow G$ satisfies the hypotheses and let $\{a_m\}_{m\in \omega}$ be a sequence in $\phi(\HEG^M)$ and $\{k_m\}_{m\in \omega}$ a sequence of positive naturals.  Select $W_0\in \HEG^M \cap \phi^{-1}(a_0)$ and more generally select $W_m\in \HEG^{M+m} \cap \phi^{-1}(a_m)$.  Set $U_m = W_mU_{m+1}^{k_m}$ and notice that the  $U_m$
are in $\HEG^M$ and the sequence $\{\phi(U_m)\}_{m\in \omega}$ is a solution to the set of equations $y_m = a_my_{m+1}^{k_m}$ in $\phi(\HEG^M)$.

(2)  Let $\phi: H \rightarrow G$ be a homomorphism with $H$ completely metrizable, say by the metric $d$, and $V$ be a neighborhood of $1_H$ as in the hypotheses.  Let $\{a_m\}_{m\in\omega} \subseteq \phi(V)$ and let $\{k_m\}_{m\in\omega}$ be a sequence of positive natural numbers and $V_0 = V$.  Pick $h_0\in V\cap \phi^{-1}(a_0)$ and let $\epsilon_0 = d(h_0, H \setminus V_0)$.  Select a neighborhood $V_0 \supseteq V_1$ of $1_H$ such that $h\in V_1$ implies $d(h_0(h)^{k_0}, h_0)\leq \frac{\epsilon_0}{4}$.  Select $h_1\in V_1\cap \phi^{-1}(a_1)$ and let $\epsilon_1 = d(h_1, H\setminus V_1)$.  Supposing we have selected neighborhoods $V_0 \supset \cdots \supset V_m$ of $1_H$, elements $h_0, \ldots, h_m$ and positive real numbers $\epsilon_0, \ldots, \epsilon_m$ in this way we select a neighborhood $V_m \supseteq V_{m+1}$ of $1_H$ such that $h\in V_{m+1}$ implies
$$
\begin{array}{c}
d(h_0(h_1(\dots h_m(h)^{k_m} \dots )^{k_1})^{k_0}, h_0(h_1(\dots h_{m-1}(h_m)^{k_{m-1}}\dots )^{k_1})^{k_0})\leq \frac{\epsilon_0}{2^{m+3}}\\
d(h_1(h_2(\dots h_m(h)^{k_m}\dots )^{k_2})^{k_1}, h_1(h_2(\dots h_{m-1}(h_m)^{k_{m-1}}\dots )^{k_2})^{k_1})\leq \frac{\epsilon_1}{2^{m+3}}\\
\vdots\\
d(h_{m}(h)^{k_m}, h_{m}) \leq \frac{\epsilon_m}{2^{m+3}}
\end{array}
$$
\noindent Select $h_{m+1}\in V_{m+1} \cap \phi^{-1}(a_{m+1})$ and let $\epsilon_{m+1} = d(h_{m+1}, H \setminus V_{m+1})$.  It is straightforward to check that for each $m\in \omega$ the sequence $h_m(h_{m+1}(\cdots h_r^{k_{r-1}}\cdots )^{k_{m+1}})^{k_{m}}$ is Cauchy and converges to an element $y_m$.  It is also clear that $d(y_m, h_m)\leq \frac{\epsilon_m}{2}$ so that in particular $y_m\in V_m$.  Clearly $\{b_m = \phi(y_m)\}_{m\in \omega}$ is a solution to the system of equations.

(3) Let $\phi: H \rightarrow G$ be a homomorphism with $H$ locally compact Hausdorff and $V$ be a neighborhood of $1_H$ as in the hypotheses.  We may assume without loss of generality that $\overline{V}$ is compact.  Let $\{a_m\}_{m\in\omega} \subseteq \phi(V)$ and let $\{k_m\}_{m\in\omega}$ be a sequence of positive natural numbers and select neighborhood $V_0$ of $1_H$ such that $\overline{V_0} \subseteq V$.  Select $h_0\in V_0 \cap \phi^{-1}(a_0)$.  Pick a neighborhood $V_1 \subseteq V_0$ of $1_H$ such that $h\in \overline{V_1}$ implies $h_0h^{k_0}\in V_0$.  Select $h_1\in V_1 \cap \phi^{-1}(a_1)$.  Supposing that we have selected $V_0\supseteq  \ldots \supseteq V_m$ and elements $h_0, \ldots, h_m$ in this way, we select $V_{m+1} \subseteq V_m$ such that $h\in \overline{V_{m+1}}$ implies $h_mh^{k_m}\in V_m$.  Select $h_{m+1}\in V_{m+1} \cap \phi^{-1}(a_{m+1})$.  Letting $K_m = h_0(h_1(\dots h_m(\overline{V_{m+1}})^{k_m}\dots)^{k_1})^{k_0}$ we have $K_m$ as a nesting sequence of nonempty compacta and therefore we may select $y_0 \in \bigcap_{m\in \omega}K_m$.  Let $b_0 = y_0$.  Given an $M\in \omega$ we may select $y_{M+1}\in \overline{V_{M+1}}$ such that $y_0 =  h_0(h_1(\dots h_M(y_{M+1})^{k_M}\dots)^{k_1})^{k_0}$.  For each $1\leq m\leq M$ define $y_m=h_m(\dots h_M(y_{M+1})^{k_M} \dots )^{k_m}$.  Letting $b_m = \phi(y_m)$ for each $1 \leq m\leq M+1$ it is clear that $\{b_m\}_{m=0}^{M+1}$ is a solution to $\{y_m = a_my_{m+1}^{k_m}\}_{m=0}^M$.
\end{proof}

The relevance of Theorem \ref{Eventualconstantsolutions} is given in the following lemma, which is the conjunction of \cite[Theorem 4.12]{CaCon} and \cite[Prop. 3.5]{ConCor}.

\begin{lemma}\label{eventualconstant}  The following hold for a group $G$ satisfying $|G|<2^{\aleph_0}$:

\begin{enumerate}
\item  If $\phi: \HEG \rightarrow G$ is an abstract group homomorphism then there exists some $M\in \omega$ such that $M'\geq M$ implies $\phi(\HEG^{M'}) = \phi(\HEG^M)$.

\item  If $\phi: H \rightarrow G$ is an abstract group homomorphism with $H$ either a completely metrizable or a locally compact Hausdorff topological group then there exists an open neighborhood $U$ of $1_H$ such that for every neighborhood $V\subseteq U$ of $1_H$ we have $\phi(V) = \phi(U)$.
\end{enumerate}

\end{lemma}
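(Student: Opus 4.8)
The plan is to prove both parts by contradiction, in each case extracting from a failure of stabilization an injection $\mathcal{P}(\omega)\hookrightarrow G$ and thereby contradicting $|G|<2^{\aleph_0}$. The monotonicity needed to even speak of ``stabilization'' comes for free: since $\HEG^0\supseteq\HEG^1\supseteq\cdots$, the images $\phi(\HEG^m)$ form a descending chain of \emph{subgroups} of $G$, and since the neighborhoods of $1_H$ are downward directed, the sets $\phi(V)$ form a downward directed family. In each case, if the conclusion fails I may pass to a strictly descending subsequence and work with that.

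For (1), suppose the chain does not stabilize and choose $m_0<m_1<\cdots$ with $\phi(\HEG^{m_{i+1}})\subsetneq\phi(\HEG^{m_i})$. For each $i$ pick $g_i\in\phi(\HEG^{m_i})\setminus\phi(\HEG^{m_{i+1}})$ and a representative $w_i\in\HEG^{m_i}$ with $\phi(w_i)=g_i$. For $S\subseteq\omega$ set $y_S=\prod_{i\in S}w_i$, the concatenation taken in increasing order of $i$. This is a legitimate element of $\HEG$: a fixed letter $a_k$ occurs only in those factors $w_i$ with $m_i\le k$, of which there are finitely many, each contributing finitely many occurrences. I claim $S\mapsto\phi(y_S)$ is injective. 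If $S\ne S'$, let $i_0=\min(S\triangle S')$, say $i_0\in S\setminus S'$; then $y_S=P\,w_{i_0}\,B$ and $y_{S'}=P\,B'$, where $P=\prod_{i\in S,\,i<i_0}w_i$ is the common prefix and $B,B'\in\HEG^{m_{i_0+1}}$ are tails supported on letters of index $\ge m_{i_0+1}$. Since $\HEG^{m_{i_0+1}}$ is a subgroup, $\phi(B),\phi(B')\in\phi(\HEG^{m_{i_0+1}})$, so $\phi(y_S)=\phi(y_{S'})$ would give $\phi(w_{i_0})=\phi(B')\phi(B)^{-1}\in\phi(\HEG^{m_{i_0+1}})$, contradicting $g_{i_0}\notin\phi(\HEG^{m_{i_0+1}})$. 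Hence $|G|\ge2^{\aleph_0}$, a contradiction.

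For (2) the aim is to import exactly this argument into $H$ by manufacturing Hawaiian-earring structure from a putative non-stabilizing family of neighborhoods. Negating the conclusion produces, for every neighborhood $U$ of $1_H$, a subneighborhood $V$ with $\phi(V)\subsetneq\phi(U)$; iterating, I build a descending sequence $U=U_0\supseteq U_1\supseteq\cdots$ together with $h_n\in U_n$ satisfying $\phi(h_n)\notin\phi(U_{n+1})$. In the completely metrizable case I arrange (using completeness, precisely as in the construction proving Theorem \ref{Eventualconstantsolutions}(2)) that the $h_n$ form a null sequence whose words converge, so that $a_n\mapsto h_n$ extends to a homomorphism $\psi:\HEG\to H$ with $\psi(\HEG^m)\subseteq U_m$. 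Applying part (1) to $\phi\circ\psi$ (legitimate since $|G|<2^{\aleph_0}$) would force $(\phi\circ\psi)(\HEG^m)$ to stabilize; but $\phi(h_m)\in(\phi\circ\psi)(\HEG^m)$ while $(\phi\circ\psi)(\HEG^{m+1})\subseteq\phi(U_{m+1})\not\ni\phi(h_m)$, so the chain strictly descends at every stage, the desired contradiction. The locally compact Hausdorff case follows the same template, with the convergence of the defining products supplied instead by the nested-compacta argument of Theorem \ref{Eventualconstantsolutions}(3).

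The step I expect to be the main obstacle is exactly the transition from (1) to (2). In part (1) the tails $B,B'$ are absorbed by the \emph{subgroup} $\phi(\HEG^{m_{i_0+1}})$, which is what makes the witness $g_{i_0}$ detectable; but in a topological group the neighborhoods $U_n$ are not subgroups, so $\phi(U_{n+1})$ is a mere subset and a difference at the least coordinate need not survive modulo the tail-images. Recovering the subgroup structure, by routing the whole construction through a genuine homomorphism $\psi:\HEG\to H$ and reducing to part (1), is the crux; and it is precisely here that the completeness- and compactness-driven selection of the $h_n$ and $U_n$ furnished by Theorem \ref{Eventualconstantsolutions} is needed to guarantee that $\psi$ is well defined and carries each retract $\HEG^m$ into $U_m$.
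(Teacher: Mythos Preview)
The paper does not prove this lemma; it is quoted as the conjunction of \cite[Theorem 4.12]{CaCon} and \cite[Prop.~3.5]{ConCor}. Your argument for part~(1) is correct and is essentially the standard one.

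For part~(2) there is a genuine gap. You assert that, after choosing the $h_n$ carefully, the assignment $a_n\mapsto h_n$ extends to a homomorphism $\psi\colon\HEG\to H$ with $\psi(\HEG^m)\subseteq U_m$, and then invoke part~(1) for $\phi\circ\psi$. But no such $\psi$ is available in general. Words in $\HEG$ are indexed by arbitrary countable linear orders, and a completely metrizable group has no mechanism for evaluating a product over, say, a dense order type; completeness supplies limits of Cauchy \emph{sequences}, nothing more. Theorem~\ref{Eventualconstantsolutions}(2), which you invoke, produces a single limit along an $\omega$-indexed chain of approximations---this is very far from a map defined on all of $\W$, let alone one respecting the $\sim$-relations. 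Even granting such a $\psi$, the containment $\psi(\HEG^m)\subseteq U_m$ would require control over \emph{all} words in the tail letters, not just the ordered tail products you can actually build. You correctly identified the transition from (1) to (2) as the crux, but routing through a homomorphism $\HEG\to H$ does not resolve it.

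The repair is to drop $\HEG$ and run your part~(1) injection argument directly in $H$, using only the $\omega$-ordered products $y_S=\lim_{N}\prod_{i\in S,\,i\le N}h_i$. Interleave the choice of $U_{n+1}$ with metric (respectively nested-compacta) conditions, exactly as in the proof of Theorem~\ref{Eventualconstantsolutions}, so that each $y_S$ converges and every tail $\prod_{i\in S,\,i>n}h_i$ lies in $U_{n+1}$. Since $U_{n+1}$ is not a subgroup, the comparison at the least differing index $i_0$ now yields only $\phi(h_{i_0})\in\phi(U_{i_0+1})\phi(U_{i_0+1})^{-1}$; accordingly, at each stage first pass to $V\subseteq U_n$ with $\phi(V)\subsetneq\phi(U_n)$, pick $h_n\in U_n$ with $\phi(h_n)\notin\phi(V)$, and then take $U_{n+1}\subseteq V$ symmetric with $U_{n+1}\cdot U_{n+1}\subseteq V$, together with the required smallness condition. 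This is essentially how the cited reference \cite{ConCor} proceeds.
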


\begin{definition}  We say that a group $G$ satisfies conditions (*) if 

\begin{enumerate}
\item $|G|<2^{\aleph_0}$;

\item $G$ is torsion-free;

\item There exists a $j\in \omega \setminus \{0\}$ for which every $g\in G\setminus \{1_G\}$ has a sequence $\{p_m\}_{m\in \omega}$ such that for every sequence of nonzero naturals $\{k_m\}_{m\in \omega}$ for which $p_m$ divides $k_m$ the system of equations $y_m = g^jy_{m+1}^{k_m}$ has no solution in $G$.

\end{enumerate}

\end{definition}

\begin{definition}  We say $G$ satisfies conditions (**) if 

\begin{enumerate}
\item $|G|<2^{\aleph_0}$;

\item $G$ is torsion-free; 

\item There exists $j\in \omega\setminus \{0\}$ for which every $g\in G\setminus \{1_G\}$ has a sequence $\{p_m\}_{m\in \omega}$ such that for every sequence of nonzero naturals $\{k_m\}_{m\in \omega}$ for which $p_m$ divides $k_n$ and for every $b_0\in G$ there exists some $M\in \omega$ for which there do not exist $b_1, \ldots, b_{M+1}$ for which $\{b_m\}_{m=0}^{M+1}$ is a solution to the finite system of equations $\{y_m = g^jy_{m+1}\}_{m=0}^M$.

\end{enumerate}
\end{definition}

It is clear that conditions (**) imply conditions (*).  We will use conditions (**) for our proof of Theorem \ref{braidslender}.  We include mention of both conditions, however, because it is still unknown whether n-, cm- and lcH-slenderness are interchangeable for groups of cardinality $<2^{\aleph_0}$.  Among abelian groups of cardinality $<2^{\aleph_0}$ these conditions are all equivalent (see \cite[Theorem C]{ConCor}).  Also, there is known to exist a countable group which is n-, cm-, and lcH-slender and which does not satisfy (**), see Remark \ref{Whyboth} below.  It is unclear whether there is a countable group which is slender in one of these senses and which fails to satisfy (*).

\begin{lemma}\label{nicestar}  Any group satisfying (*) is n- and cm-slender.  Any group satisfying (**) is lcH-slender.
\end{lemma}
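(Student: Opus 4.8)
The plan is to prove the contrapositive in each case: if a group $G$ satisfying (*) fails to be n-slender (or cm-slender), then we produce a forbidden solvable system of equations, contradicting clause (3). I would begin with the n-slender case. Suppose $\phi: \HEG \to G$ witnesses failure of n-slenderness, so that $\phi(\HEG^{m}) \neq \{1_G\}$ for every $m$. By Lemma \ref{eventualconstant}(1) there is an $M$ with $\phi(\HEG^{M'}) = \phi(\HEG^M)$ for all $M' \geq M$; since the homomorphism is nontrivial on all tails, this common image $\phi(\HEG^M)$ is a nontrivial subgroup, so fix some $g \in \phi(\HEG^M) \setminus \{1_G\}$. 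Condition (3) hands us the integer $j$ and a sequence $\{p_m\}_{m\in\omega}$ associated to $g$. The idea is now to feed the constant sequence $a_m = g^j$ (which lies in $\phi(\HEG^M)$) together with the exponent sequence $k_m = p_m$ into Theorem \ref{Eventualconstantsolutions}(1).

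Here is the crux of the argument. Theorem \ref{Eventualconstantsolutions}(1) guarantees that the system $y_m = g^j y_{m+1}^{k_m}$ actually has a solution in $\phi(\HEG^M) \leq G$ — precisely because $g^j \in \phi(\HEG^M)$ and the image has stabilized. But clause (3) of (*) asserts that for this particular $g$, with the $p_m$ it provides and any sequence $k_m$ with $p_m \mid k_m$, the system $y_m = g^j y_{m+1}^{k_m}$ has \emph{no} solution in $G$. Taking $k_m = p_m$ satisfies the divisibility hypothesis trivially, so the two statements are in direct conflict, and the contradiction forces $\phi$ to be trivial on some tail $\HEG^m$, i.e. $G$ is n-slender. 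I would carry out the cm-slender case identically, replacing Lemma \ref{eventualconstant}(1) by part (2) and Theorem \ref{Eventualconstantsolutions}(1) by part (2): a homomorphism $\phi$ from a completely metrizable group with non-open kernel yields a neighborhood $V$ on which $\phi$ stabilizes, one extracts a nontrivial $g \in \phi(V)$, and part (2) of the Theorem produces a solution in $\phi(V) \leq G$, again contradicting clause (3).

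For the lcH-slender half under conditions (**), the structure is the same but one must be attentive to the weaker conclusion of Theorem \ref{Eventualconstantsolutions}(3), which only yields a $b_0 \in \phi(V)$ such that \emph{every} finite truncation of the system is solvable, rather than a single infinite solution. This is exactly why (**) is phrased in terms of finite systems and a fixed $b_0$: starting from a non-open kernel and Lemma \ref{eventualconstant}(2), I obtain $V$ and a nontrivial $g \in \phi(V)$, apply part (3) with $a_m = g^j$ and $k_m = p_m$ to get the element $b_0$, and then invoke clause (3) of (**) — which, for this $g$ and this $b_0$, furnishes an $M$ for which no solution $b_1,\dots,b_{M+1}$ to the finite system exists, directly contradicting the conclusion of Theorem \ref{Eventualconstantsolutions}(3).

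The main obstacle, and the step deserving the most care, is verifying in each case that the stabilized image $\phi(\HEG^M)$ or $\phi(V)$ is genuinely nontrivial, so that a nonidentity $g$ is available to which clause (3) applies; this is where the assumption that the kernel is non-open (equivalently, that $\phi$ is nontrivial on all tails) must be converted into the existence of a nontrivial element of the \emph{stabilized} image. A secondary point to check is that the solution guaranteed by Theorem \ref{Eventualconstantsolutions} indeed lies in $G$ (trivially) and uses exactly the data $a_m = g^j$, $k_m = p_m$ permitted by (*)/(**), so that the divisibility condition $p_m \mid k_m$ holds and the no-solution clause genuinely applies.
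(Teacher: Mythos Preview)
Your proposal is correct and follows essentially the same approach as the paper: stabilize the image via Lemma \ref{eventualconstant}, pick a nontrivial $g$ there, and contradict clause (3) of (*) or (**) by invoking the corresponding part of Theorem \ref{Eventualconstantsolutions} with $a_m = g^j$ and $k_m = p_m$. Your worry about verifying nontriviality of the stabilized image is not a real obstacle---in the $\HEG$ case it is immediate from the failure of n-slenderness, and in the topological cases non-openness of $\ker(\phi)$ forces $\phi(Z)\neq\{1_G\}$ for every neighborhood $Z$ of the identity, in particular for $V$.
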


\begin{proof}  Suppose $G$ satisfies (*) and that $\phi: \HEG \rightarrow G$ is a homomorphism.  Since $|G|<2^{\aleph_0}$ we have by Lemma \ref{eventualconstant} the existence of an $M\in \omega$ for which $M'\geq M$ implies $\phi(\HEG^{M'}) = \phi(\HEG^M)$.  If $\phi(\HEG^M)$ is nontrivial then since $G$ satisfies (*) we take $j$ and $g\in \phi(\HEG^M)\setminus \{1_G\}$ and $\{p_m\}_{m\in \omega}$ as given by (*).  Then the system of equations $y_m = g^jy_{m+1}^{p_m}$ has no solution in $G$, and therefore no solution in $\phi(\HEG^M)$, contrary to Theorem \ref{Eventualconstantsolutions}.

The proof where the domain is changed to be completely metrizable is entirely analogous, and the proof in the (**) case follows by appropriately modifying the proof and using Lemma \ref{eventualconstant} and Theorem \ref{Eventualconstantsolutions}.
\end{proof}

\begin{lemma}\label{norootsfordoublestar}  If $G$ satisfies (**) then there cannot exist $g\in G\setminus \{1_G\}$ such that for each $n\in \omega \setminus \{0\}$ there exists $h_n\in G$ such that $h_n^n = g$.  
\end{lemma}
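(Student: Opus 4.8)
The plan is to argue by contradiction and to violate condition (**)(3) head-on. Suppose $G$ satisfies (**) and that there is some $g\in G\setminus\{1_G\}$ admitting an $n$-th root $h_n$ for every $n\in\omega\setminus\{0\}$. Let $j$ and the sequence $\{p_m\}_{m\in\omega}$ be as supplied by (**)(3) for this particular $g$. I would then commit once and for all to the choices $k_m:=p_m$ (so that $p_m\mid k_m$, as required) and $b_0:=g^j$, and show that with these data the finite system $\{y_m=g^jy_{m+1}^{k_m}\}_{m=0}^{M}$ admits a solution for \emph{every} $M\in\omega$. Since (**)(3) asserts that for these same $\{k_m\}$ and $b_0$ there must be some $M$ with no solution, this yields the contradiction and proves the lemma.

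For the construction I would fix $M$ and set $n:=k_0k_1\cdots k_M$. Using infinite divisibility, pick $h:=h_n$ with $h^n=g$, so that $g^j=h^{jn}$, and seek a solution lying entirely inside the cyclic group $\langle h\rangle$, of the form $b_m:=h^{f_m}$. Since $g^jb_{m+1}^{k_m}=h^{jn}(h^{f_{m+1}})^{k_m}=h^{jn+k_mf_{m+1}}$, the equation $b_m=g^jb_{m+1}^{k_m}$ holds as soon as $f_m=jn+k_mf_{m+1}$. I would therefore put $f_0:=jn$ (forcing $b_0=h^{jn}=g^j$, as wanted) and define $f_{m+1}:=(f_m-jn)/k_m$ descending to $f_{M+1}$. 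A one-line induction gives $f_m\in\frac{jn}{k_1\cdots k_{m-1}}\mathbb{Z}$, and since $k_1\cdots k_{m-1}\mid n$ for every $m\leq M+1$, each $f_m$ is a genuine integer; hence the $b_m$ are honest elements of $G$ and $\{b_m\}_{m=0}^{M+1}$ solves the finite system. The essential point is that $b_0=h^{jn}=g^j$ does \emph{not} depend on $n$, hence not on $M$, so a single $b_0$ serves all $M$ at once.

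The step I expect to be the real obstacle is resisting the natural but doomed attempt to manufacture a single \emph{infinite} solution (which would contradict the weaker (*)). Infinite divisibility of $g$ does not hand us a coherent tower of roots: roots need not be unique, and a root of $g$ need not itself be divisible, so an infinite solution confined to one cyclic subgroup would require an infinite descent $f_{m+1}=(f_m-jn)/k_m$ through $\mathbb{Z}$, which is impossible once the $p_m$ are unbounded. The reason for working with (**) rather than (*) is exactly that (**) only demands finite systems sharing a common $b_0$: for each $M$ one is free to use a \emph{different}, sufficiently divisible single root $h_n$ and to carry out only a finite (hence harmless) descent inside $\langle h\rangle$, thereby sidestepping both the non-commutativity of roots and the impossibility of infinite integer descent. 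Torsion-freeness is used only to ensure $g^j\neq 1_G$ and that distinct powers of $h$ are distinct, so that the exponent bookkeeping above is legitimate.
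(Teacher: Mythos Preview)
Your proof is correct and follows essentially the same approach as the paper's: for each $M$ one passes to the cyclic subgroup generated by a $(k_0k_1\cdots k_M)$-th root $h$ of $g$ and writes down a solution as powers of $h$, exploiting that the fixed first term does not depend on $M$. The only cosmetic differences are that the paper takes $b_0=g$ rather than $g^j$, allows an arbitrary sequence $\{k_m\}$ with $p_m\mid k_m$ rather than committing to $k_m=p_m$, and gives an explicit closed formula for the exponents instead of your recursive description; none of these changes the substance. (Your divisibility bookkeeping is fine; the denominator at stage $m$ is $k_0\cdots k_{m-1}$ or $k_1\cdots k_{m-1}$ depending on how one handles $f_1=0$, but either way it divides $n$ for $m\le M+1$.)
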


\begin{proof}  Suppose on the contrary that such a $g$ exists.  Pick $j$ and sequence $\{p_m\}_{m\in \omega}$ since $G$ satisfies (**).  For each $m\in \omega$ let $k_m \geq 1$ with $p_m \mid k_m$.  Let $b_0 = g$.  For a fixed $M\in \omega$ pick $h\in G$ such that $h^{k_0\cdots k_M} = g$.  For each $0 \leq m\leq M+1$ we let $b_m = h^{k_M\cdots k_{m}(1-j-jk_1-jk_2- \ldots -jk_{m-1})}$.  It is straightforward to check that $b_0, \ldots, b_{M+1}$ is a solution to $\{y_m = g^jy_{m+1}^{k_m}\}_{m=0}^M$.
\end{proof}

We provide a couple of lemmas demonstrating some desirable closure properties.

\begin{lemma}\label{extensions}  
	If $1 \rightarrow N \rightarrow G \rightarrow^q Q \rightarrow 1$ is a short exact sequence and both $N$ and $Q$ satisfy (*) then $G$ also satisfies (*).  The same holds when (*) is replaced with (**).
\end{lemma}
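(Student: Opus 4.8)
The plan is to verify the three defining clauses of $(*)$ for $G$ in turn, with the first two routine and the third carrying all the weight. Clause (1) is immediate: $G$ is a union of $|Q|$ cosets of $N$, so $|G| = |N|\cdot|Q| < 2^{\aleph_0}$ by elementary cardinal arithmetic. For clause (2), a nontrivial torsion element $g\in G$ would have $q(g)$ torsion in $Q$, forcing $q(g)=1$, so that $g$ is a nontrivial torsion element of $N$ — impossible. Before attacking clause (3) I would arrange a common constant: if $N$ satisfies $(*)$ with constant $j_N$ and $Q$ with $j_Q$, set $j=j_Nj_Q$. Since the groups are torsion-free, for $n\in N\setminus\{1\}$ the element $n^{j_Q}$ is nontrivial and $n^{j}=(n^{j_Q})^{j_N}$, so applying clause (3) for $N$ to $n^{j_Q}$ yields a sequence witnessing that $y_m=n^{j}y_{m+1}^{k_m}$ has no solution in $N$; thus $N$, and symmetrically $Q$, satisfies $(*)$ with the single constant $j$, which I take as the constant for $G$.

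For clause (3), fix $g\in G\setminus\{1\}$ and write $q\colon G\to Q$. The easy case is $h:=q(g)\neq 1$: here I use the sequence $\{p_m\}$ furnished by clause (3) for $Q$ applied to $h$. Any solution $\{b_m\}$ in $G$ to $y_m=g^{j}y_{m+1}^{k_m}$ projects, since $q(g^{j})=h^{j}$, to a solution $\{q(b_m)\}$ in $Q$ of $y_m=h^{j}y_{m+1}^{k_m}$, contradicting the choice of $\{p_m\}$.

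The hard case, and the main obstacle, is $g\in N\setminus\{1\}$, where a solution in $G$ need not stay inside $N$. Here I let $\{p_m^{N}\}$ be the sequence given by clause (3) for $N$ applied to $g$ and set $p_m=\operatorname{lcm}(p_m^{N},\,m+1)$, the second factor forcing $\prod_{i<m}p_i$ to be divisible by $m!$. Given a solution $\{b_m\}$ in $G$ with $p_m\mid k_m$, projecting to $Q$ annihilates the $g^{j}$ term (as $q(g)=1$), leaving $q(b_m)=q(b_{m+1})^{k_m}$, a coherent chain of roots of $c:=q(b_0)$. If $c\neq 1$, then $\langle q(b_0)\rangle\le\langle q(b_1)\rangle\le\cdots$ is an increasing union of infinite cyclic groups, hence a torsion-free locally cyclic group embedding in $\mathbb{Q}$; since $q(b_0)=q(b_m)^{k_0\cdots k_{m-1}}$ with $m!\mid k_0\cdots k_{m-1}$, the indices diverge and the union is isomorphic to $\mathbb{Q}$, so $Q$ contains a copy of $\mathbb{Q}$ — impossible, since $Q$ is $n$-slender by Lemma \ref{nicestar} and $n$-slender groups are $\mathbb{Q}$-free. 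If instead $c=1$ then, $Q$ being torsion-free, every $q(b_m)=1$, so $\{b_m\}\subseteq N$ is a solution in $N$ with $p_m^{N}\mid k_m$, contradicting the choice of $\{p_m^{N}\}$. This settles clause (3) and the statement for $(*)$.

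For the $(**)$ version I would run the parallel argument phrased with finite solutions and a prescribed $b_0$: the reduction to a common $j$ and the case $q(g)\neq 1$ go through verbatim on projected finite solutions. In the case $g\in N$, a finite solution with $q(b_0)=1$ again forces all $b_m\in N$ by torsion-freeness, reducing to $N$; and if $q(b_0)\neq 1$, then were finite solutions to exist for every length $M$, the element $q(b_0)$ would admit a $k_0\cdots k_M$-th root for all $M$, hence roots of all orders since $m!\mid k_0\cdots k_M$, contradicting Lemma \ref{norootsfordoublestar} applied to $Q$. Throughout, the point demanding care is exactly the $g\in N$ case: one must choose $p_m$ to carry the $N$-sequence \emph{and} enough divisibility to trigger the $\mathbb{Q}$-subgroup contradiction for $(*)$ (respectively the infinite-root contradiction for $(**)$), since it is precisely the solutions escaping $N$ that the closure of $N$ alone cannot control.
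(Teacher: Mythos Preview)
Your argument is correct and follows essentially the same route as the paper's proof: the same case split on whether $q(g)$ is trivial, the same projection argument when $q(g)\neq 1$, and in the $g\in N$ case the same trick of enlarging the $N$-sequence by a factorial-growing divisor so that a putative solution either falls back into $N$ or produces in $Q$ an element with roots of all orders (yielding a copy of $\mathbb{Q}$ for $(*)$ via Lemma~\ref{nicestar}, and invoking Lemma~\ref{norootsfordoublestar} for $(**)$). The only cosmetic differences are that you make the ``common constant $j=j_Nj_Q$'' reduction explicit up front, and you use $p_m=\operatorname{lcm}(p_m^{N},m+1)$ where the paper uses $p_m=(m+2)p_m'$ (resp.\ $(m+1)p_m'$); these are immaterial.
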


\begin{proof}  
	Assume the hypotheses.  Certainly such a $G$ must be torsion-free and of appropriate cardinality.  Let $j_N$ and $j_Q$ be the constants for $N$ and $Q$ respectively in the definition of (*).  We let $j = j_Qj_N$.  Suppose $g\in G \setminus \{1_G\}$.  In case $q(g) \neq 1_Q$ we let $\{p_m\}_{m\in \omega}$ be the sequence determined for the nontrivial element $q(g^j)$ in $Q$.  Suppose $p_m$ divides $k_m\neq 0$.  Any solution to the equations $y_m = g^jy_{m+1}^{k_m}$ projects under $q$ to a solution of $y_m = q(g^j)y_{m+1}^{k_m}$ in $Q$, which does not exist.

Suppose now that $q(g) = 1_Q$.  Then $g\in N$.  Under the definition of (*) select a sequence $\{p_m'\}_{m\in \omega}$ for $g^j$ as an element of $N$.  Let $p_m = (m+2)p_m'$ and let $\{k_m\}_{m\in\omega}$ be a sequence of positive naturals for which $p_m$ divides $k_m$.  Suppose that $y_m = g^jy_{m+1}^{k_m}$ has a solution $\{b_m\}_{m\in \omega}$ in $G$.  Since $q(g) = 1_Q$ we get that $q(b_m) = q(b_{m+1})^{k_m}$.  Since $Q$ is torsion-free we know $q(b_0) \neq 1_Q$ if and only if $q(b_m) \neq 1_Q$ for all $m\in \omega$.  For contradiction suppose that $q(b_0) \neq 1_Q$.  Since $p_m$ divides each $k_m$ we get
$$
\begin{array}{c}
q(b_0) = (q(b_1)^{\frac{k_0}{2}})^2\\
q(b_1)^{\frac{k_0}{2}} = (q(b_2)^{\frac{k_0}{2}\frac{k_1}{3}})^3\\
q(b_2)^{\frac{k_0}{2}\frac{k_1}{3}} = (q(b_3)^{\frac{k_0}{2}\frac{k_1}{3}\frac{k_2}{4}})^4\\
\vdots
\end{array}
$$
\noindent  Then we have an isomorphic copy of the group $\mathbb{Q}$ inside of $Q$.  But $Q$ is n-slender by Lemma \ref{nicestar} and we have a contradiction.  

Thus all the $b_m$ lie inside $N$ and are a solution to $y_m = g^jy_{m+1}^{k_m}$, a contradiction.

Now suppose the appropriate hypotheses for the claim regarding (**).  Let $j = j_Qj_N$.  Suppose $g\in G\setminus \{1_G\}$.  If $q(g) \neq 1_Q$ select $\{p_m\}_{m\in \omega}$ for $q(g^j)$ as in the definition of (**) for the group $Q$.  Given $b_0\in G$ we select $M$ such that such that the system of equations $\{y_m = q(g^j)y_{m+1}^{k_m}\}_{m=0}^M$ does not have a solution in $Q$ with $y_0 = q(b_0)$.  Then the system of equations $\{y_m = gy_{m+1}^{k_m}\}_{m=0}^M$ cannot have a solution in $G$.

Suppose now that $g\in N$.  Let $\{p_m'\}_{m\in \omega}$ be a sequence in $\omega \setminus \{0\}$ satisfying the conditions of (**) in $N$.  Let $p_m = (m+1)p_m'$.  Let $\{k_m\}_{m\in \omega}$ be a sequence of nonzero naturals such that $p_m\mid k_m$.  Suppose that for some $b_0\in G$ the system of equations $\{y_m = g^jy_{m+1}^{k_m}\}_{m=0}^M$ always has a solution in $G$ with $b_0 = y_0$ for each $M\in \omega$.  If $b_0 \in N$ then any solution of $\{y_m = g^jy_{m+1}^{k_m}\}_{m=0}^{M}$ has all $b_1, \ldots, b_{M+1}$ in $N$ as well since $Q$ is torsion-free.  This cannot be so, and thus $b_0 \notin \ker(q)$.

Since $q(b_0)\neq 1_Q$ and $q(g) = 1_Q$ we see that $\{y_m = y_{m+1}^{k_m}\}_{m=0}^M$ always has a solution in $Q$ with $y_0 = q(b_0)$.  Then arguing as in the (*) case, we see that $q(b_0)$ has an $n$-th root in $Q$ for each $n$.  This contradicts Lemma \ref{norootsfordoublestar}.
\end{proof}

For the next lemma we recall that a group $G$ is of \emph{bounded exponent} if there exists some $d\geq 1$ such that $g^d = 1_G$ for all $g\in G$.  Obviously a group of bounded exponent is torsion, and any finite group is of bounded exponent. 

\begin{lemma}\label{finiteextension}  Suppose $1 \rightarrow N \rightarrow G \rightarrow Q \rightarrow 1$ is a short exact sequence, $N$ satisfies (*), $G$ is torsion-free, and $Q$ of bounded exponent and $|Q|<2^{\aleph_0}$.  Then $G$ satisfies (*).  Similarly when (**) replaces (*).
\end{lemma}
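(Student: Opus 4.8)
The plan is to follow the strategy of Lemma \ref{extensions}, but to exploit the bounded exponent of $Q$ in place of the torsion-freeness that was available there. Conditions (1) and (2) of (*) (respectively (**)) are immediate for $G$: torsion-freeness is assumed, and $|G| = |N|\,|Q| < 2^{\aleph_0}$ since $N$ satisfies (*), so $|N| < 2^{\aleph_0}$, and $|Q| < 2^{\aleph_0}$ by hypothesis. All the work is in verifying condition (3). To set up the constants, let $d \geq 1$ be an exponent for $Q$, so that $x^d = 1_Q$ for all $x \in Q$, let $j_N$ be the constant provided by (*) for $N$, and set $j = d\, j_N$. Given $g \in G \setminus \{1_G\}$, the element $g^d$ lies in $N$ because $q(g)^d = 1_Q$, and $g^d \neq 1_G$ since $G$ is torsion-free; thus $g^d$ is a nontrivial element of $N$. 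I would let $\{p_m'\}_{m \in \omega}$ be the sequence given by (*) for $g^d$ and then put $p_m = d\, p_m'$. The point of the factor $d$ is that whenever $p_m \mid k_m$ one gets simultaneously $d \mid k_m$ and $p_m' \mid k_m$. Note also that $g^j = (g^d)^{j_N}$, so the system $y_m = g^j y_{m+1}^{k_m}$ is literally the system $y_m = (g^d)^{j_N} y_{m+1}^{k_m}$ to which (*) for $N$ applies.

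For the (*) claim, suppose $\{b_m\}_{m \in \omega}$ solved $y_m = g^j y_{m+1}^{k_m}$ in $G$ for some $\{k_m\}$ with $p_m \mid k_m$. Applying $q$ and using $q(g^j) = q(g)^{d j_N} = 1_Q$ gives $q(b_m) = q(b_{m+1})^{k_m}$ for every $m$; since $d \mid k_m$ and $Q$ has exponent $d$, the right-hand side is $1_Q$, so $q(b_m) = 1_Q$ and hence $b_m \in N$ for every $m$. Then $\{b_m\}$ is a solution in $N$ to $y_m = (g^d)^{j_N} y_{m+1}^{k_m}$ with $p_m' \mid k_m$, contradicting the choice of $\{p_m'\}$. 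This divisibility step is exactly what replaces the $\mathbb{Q}$-subgroup argument of Lemma \ref{extensions}: bounded exponent forces the projected equations to collapse immediately, so I expect to need neither torsion-freeness nor slenderness of $Q$.

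For the (**) claim the same constants should work, but the finite systems require slightly more care because the top variable of a finite solution is not controlled by the projection argument. Fix $b_0 \in G$. If $q(b_0) \neq 1_Q$, then already $M = 0$ suffices, since any solution $b_0, b_1$ of $y_0 = g^j y_1^{k_0}$ would force $q(b_0) = q(b_1)^{k_0} = 1_Q$. If instead $b_0 \in N$, let $M_0$ be the index furnished by (**) for $g^d \in N$, $b_0$, and the sequence $\{k_m\}$ (legitimate since $p_m' \mid k_m$), so that no $c_1, \ldots, c_{M_0+1} \in N$ solve $\{y_m = (g^d)^{j_N} y_{m+1}^{k_m}\}_{m=0}^{M_0}$ with $y_0 = b_0$. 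I claim $M = M_0 + 1$ works for $G$: a solution $b_1, \ldots, b_{M_0+2} \in G$ of $\{y_m = g^j y_{m+1}^{k_m}\}_{m=0}^{M_0+1}$ with $y_0 = b_0$ forces $q(b_m) = 1_Q$ for $0 \leq m \leq M_0+1$, whence $b_0, \ldots, b_{M_0+1} \in N$, and then $b_1, \ldots, b_{M_0+1}$ is a solution in $N$ of the first $M_0+1$ equations, contradicting the choice of $M_0$. The index shift from $M_0$ to $M_0 + 1$ is precisely what absorbs the uncontrolled top variable, and this is the one genuinely new point over the (*) case; I expect it to be the main obstacle, while the rest is routine divisibility bookkeeping.
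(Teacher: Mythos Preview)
Your proof is correct and follows the same strategy as the paper: set $j = d\,j_N$ and $p_m = d\,p_m'$, use $d \mid k_m$ together with the bounded exponent of $Q$ to force any (partial) solution down into $N$, and then invoke (*) (respectively (**)) for $N$. Your explicit index shift $M = M_0 + 1$ in the (**) case and your choice to apply the $N$-hypothesis to $g^d$ (so that $(g^d)^{j_N} = g^j$ matches the system under consideration) are in fact cleaner than the paper's somewhat terse treatment of the same points.
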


\begin{proof}  Such a $G$ is certainly of appropriate cardinality, and torsion-free by assumption.  Let $j_N$ be an exponent as in the definition of (*) for $N$.  Let $d \geq 1$ be such that $h^d = 1_Q$ for all $h\in Q$.  Let $j = j_Nd$.  Let $g\in G\setminus \{1_G\}$ be given.  Pick a sequence $\{p_m'\}_{m\in \omega}$ for $g^{j}$ as an element of $N$.  Let $p_m = dp_m'$.  Suppose a nonzero sequence $\{k_m\}_{m\in \omega}$ satisfies $p_m|k_m$.

Consider the system $y_m = g^j y_{m+1}^{k_m}$.  Any solution $\{b_m\}_{m\in \omega}$ in $G$ is in fact a solution \emph{in $N$} to the system $y_m = g^jy_{m+1}^{k_m}$ (that each $b_m\in N$ follows from the fact that $b_m$ is a product of two elements such that each is a $d$ power in $G$).  This cannot exist by how $\{p_m'\}_{m\in \omega}$ was chosen.

Now suppose the appropriate hypotheses for proving the claim regarding (**).  Again, $G$ is of correct cardinality and torsion-free.  We let $j = j_Nd$.  Let $g\in G$ and let $\{p_m'\}_{m\in \omega}$ work for $g^j$ in $N$.  Let $p_m = p_m'd$ and let $\{k_m\}_{m\in \omega}$ be a nonzero sequence for which $p_m\mid k_m$.  Let $b_0\in G$ be given.  Suppose that for each $M\in \omega$ the system $\{y_m = g^jy_{m+1}^{k_m}\}_{m=0}^M$ has a solution $b_0, \ldots, b_{M+1}$ in $G$ such that $b_0 = y_0$.  Then $b_0, \ldots, b_N$ are all in $N$.  Then for each $M$ the system $\{y_m = g^jy_{m+1}^{k_m}\}_{m=0}^M$ always has a solution in $N$, contradicting the choice of $\{p_m'\}_{m\in \omega}$.
\end{proof}

Recall that a \textit{length function} on a group $G$ is a function $L: G \rightarrow [0, \infty)$ such that for all $g, h \in G$ we have

\begin{enumerate}
\item $L(1_G) = 0$

\item $L(g) = L(g^{-1})$

\item $L(gh) \leq L(g) + L(h)$
\end{enumerate}

\noindent A length function $L$ is a \textit{Dudley norm} if it takes only natural number values and for each $g\neq 1_G$ and $n\in \omega$ we have $L(g^n) \geq \max\{n, L(g)\}$ (see \cite{Du}).

\begin{lemma}  If $|G|<2^{\aleph_0}$ and $G$ has a Dudley norm then $G$ satisfies (**).

\end{lemma}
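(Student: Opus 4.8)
The plan is to verify the three clauses of (**) directly, taking the witnessing constant to be $j = 1$ and, for every $g \in G \setminus \{1_G\}$, the single sequence $p_m = 2^{m+1}$ (which does not even depend on $g$). The cardinality clause is assumed, and torsion-freeness is immediate: if $g \neq 1_G$ satisfied $g^n = 1_G$ with $n \geq 1$, then $0 = L(1_G) = L(g^n) \geq n \geq 1$, a contradiction. The substance is the third clause: given $g$, an arbitrary sequence of nonzero naturals $\{k_m\}_{m \in \omega}$ with $p_m \mid k_m$ (so $k_m \geq 2^{m+1}$), and an arbitrary $b_0 \in G$, I must produce an $M$ for which the finite system $\{y_m = g y_{m+1}^{k_m}\}_{m=0}^M$ has no solution with $y_0 = b_0$.

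First I would record two consequences of the Dudley norm that any hypothetical solution $b_0, \dots, b_{M+1}$ (so $b_m = g b_{m+1}^{k_m}$, equivalently $b_{m+1}^{k_m} = g^{-1} b_m$) must satisfy. Using $L(h^n) \geq L(h)$ together with subadditivity gives the forward bound $L(b_{m+1}) \leq L(b_{m+1}^{k_m}) = L(g^{-1} b_m) \leq L(g) + L(b_m)$, hence $L(b_m) \leq L(b_0) + m\,L(g)$ for all $m$. Using instead $L(h^n) \geq n$ for $h \neq 1_G$ gives, whenever $b_{m+1} \neq 1_G$, the backward bound $k_m \leq L(b_{m+1}^{k_m}) = L(g^{-1}b_m) \leq L(g) + L(b_m)$, so $L(b_m) \geq p_m - L(g)$.

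Now I would split on the value of the final entry $b_{M+1}$. If $b_{M+1} \neq 1_G$, the two bounds at $m = M$ combine to force $p_M = 2^{M+1} \leq L(b_0) + (M+1)L(g)$; since the left side grows exponentially and the right side only linearly in $M$, this fails once $M$ is large. If instead $b_{M+1} = 1_G$, the recursion collapses: $b_M = g$, and inductively $b_m = g^{N_m}$ with $N_m = 1 + k_m N_{m+1}$, so that $b_0 = g^{N_0}$ with $N_0 \geq \prod_{i=0}^{M-1} k_i \geq 2^M$. As $b_0$ is fixed and $G$ is torsion-free, $b_0$ is either not a positive power of $g$ (and this branch is then vacuous) or $b_0 = g^{n_0}$ with $n_0 = N_0 \geq 2^M$, which fails once $M > \log_2 n_0$. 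Choosing $M$ past both thresholds --- which depend only on $b_0$ and $g$, not on the particular $\{k_m\}$, since I only used $k_m \geq p_m$ --- rules out every solution and establishes the third clause.

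The argument is essentially a packaging of Dudley's classical computation, so I do not expect a serious obstacle; the only point requiring care is the bookkeeping of quantifiers. The sequence $\{p_m\}$ must be fixed before $b_0$ and $\{k_m\}$ are revealed, so it is important that $p_m$ grows super-linearly (to defeat the $b_{M+1} \neq 1_G$ branch for every eventual choice of $b_0$) while still having unbounded partial products (to defeat the $b_{M+1} = 1_G$ branch); the choice $p_m = 2^{m+1}$ secures both at once, and crucially the two bounds above are phrased in terms of $p_m$ rather than the actual $k_m$, so that $M$ may legitimately be chosen after $b_0$ yet uniformly in $\{k_m\}$.
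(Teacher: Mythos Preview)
Your argument is correct and follows the same Dudley-norm strategy as the paper: bound $L(b_M)$ (or $L(b_M^{k_{M-1}})$) from above via the recursion and from below via the norm axiom $L(h^n)\geq n$, and force a collision once $M$ is large. There are two cosmetic differences worth noting. First, the paper lets the sequence depend on $g$, taking $p_m=(m+2)(L(g)+1)$ and then the explicit cutoff $M=L(b_0)+1$; your exponential choice $p_m=2^{m+1}$ is independent of $g$, which is why you need super-linear growth to absorb the unknown slope $L(g)$ in the inequality $p_M\leq L(b_0)+(M+1)L(g)$. Second, for the tail case $b_{M+1}=1_G$ the paper stays inside the length calculus (computing $L(b_M^{k_{M-1}})=L(g^{k_{M-1}})\geq k_{M-1}$ and running the same chain of inequalities), whereas you switch to the algebraic observation that $b_0$ would then equal $g^{N_0}$ with $N_0\geq\prod_{i<M}k_i$, contradicting uniqueness of exponents in a torsion-free group. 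Both treatments are valid; the paper's is slightly more uniform, yours gives the mild bonus that a single sequence $\{p_m\}$ works for every $g$.
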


\begin{proof}  It is clear that any such $G$ must be torsion-free.  Thus the only condition in question within (**) is part (3).  First let $L$ be a Dudley norm.  Let $j = 1$ and $g\in G\setminus \{1_G\}$ be given.  Let $p_m = (m+2)(L(g)+1)$.  Let $k_m$ be a sequence of positive naturals such that $p_m$ divides $k_m$.  In particular $k_m \geq p_m$.  Suppose $b_0 \in G$ and let $M = L(b_0) +1$.  Suppose for contradiction that $b_0, \ldots, b_{M+1}$ is a solution to $\{y_m = gy_{m+1}\}_{m=0}^M$.  If $b_{M+1} = 1_G$ we have
$$
L(b_M^{k_{M-1}}) = L((gb_{M+1}^{k_M})^{k_{M-1}}) = L(g^{k_{M-1}})= k_{M-1} \geq (M+1)(L(g) +1)
$$
since $k_{M-1} \geq p_{M-1} = (M+1)(L(g) +1)\geq L(g)$.  If $b_{M+1}\neq 1_G$ then 
\begin{gather}\notag
\begin{split}
L(b_M^{k_{M-1}}) = & L((gb_{M+1}^{k_M})^{k_{M-1}}) \geq L(gb_{M+1}^{k_M})\geq L(b_{M+1}^{k_M}) - L(g) \geq k_M - L(g) \geq \\
\geq &  p_M -L(g)\geq (M+2)(L(g) +1) - L(g) \geq (M+1)(L(g) +1).
\end{split}
\end{gather}

Thus in either case we have $L(b_M^{k_{M-1}}) \geq (M+1)(L(g) +1)$, and so we can write
\begin{gather}\notag
\begin{split}
L(b_0) =& L(gb_1^{k_0}) \geq L(b_1^{k_0})-L(g)\\
=& L((gb_2^{k_1})^{k_0}) - L(g)\\
\geq& L(b_2^{k_1}) - 2L(g)\\
\geq& \quad \dots\\
\geq& L(b_{M-1}^{k_{M-2}})-(M-1)L(g)\\
\geq& L(b_{M-1}) - (M-1)L(g)\\
=& L(gb_{M}^{k_{M-1}}) - (M-1)L(g)\\
\geq& L(b_M^{k_{M-1}})-ML(g)\\
\geq& (M+1)(L(g) + 1)-ML(g) = L(g)+ 1 + M = L(g) + 2 +L(b_0)\\
\end{split}
\end{gather}
\noindent which is a contradiction.
\end{proof}

We recall the definition of a graph product of groups. Suppose $\Gamma = (V, E)$ is a graph (we allow the sets of vertices and edges to be of arbitrary cardinality but do not allow multiple edges or loops).  To each vertex $v\in V$ we associate a group $G_v$.  The groups $G_v$ are the \emph{vertex groups}.  The graph product $G = \Gamma(\{G_v\}_{v\in V})$ is defined by taking the free product $\ast_{v\in V} G_v$ and taking the quotient by the normal closure of the set $\{[g_{v_0}, g_{v_1}]\}_{g_{v_0} \in G_{v_0}, g_{v_1} \in G_{v_1}, \{v_0, v_1\}\in E}$.  Thus free products of groups and direct sums of groups are examples of graph products of groups, with the graphs having either no edges or being complete in the respective cases.

Each $G_v$ is a retract subgroup of $G$ and $G$ is generated by the elements of the vertex subgroups $G_v$.  Thus each element $g\in G$ has a representation as a word $g =_{G} g_0 g_1g_2\cdots g_{n-1}$, where each $g_i$ is an element of a vertex group.  In such a word we call each $g_i$ a \emph{syllable}.  Given two vertex groups $G_{v_0}$ and $G_{v_1}$ it is easy to see that the subgroup $\langle G_{v_0}\cup G_{v_1}\rangle \leq G$ is a retract of $G$ and is either isomorphic to $G_{v_0} \ast G_{v_1}$ or $G_{v_0}\times G_{v_1}$, the first being the case if and only if $\{v_0, v_1\} \notin E$.  Thus for nontrivial elements $g_0\in G_{v_0}$ and $g_1\in G_{v_1}$ we have that $[g_0, g_1] = 1$ if and only if $\{v_0, v_1\} \in E$.

Following \cite{Gre}, we say that a word $g_0 g_1\cdots g_{n-1}$ is reduced if the following hold:
\begin{enumerate}
\item Each $g_i$ is a nontrivial element of a vertex group and $g_i$ and $g_{i+1}$ are in different vertex groups for all $0\leq i<n-1$
\item If $i \leq k < j$ and 
$$
[g_i, g_{i+1}] = [g_i, g_{i+2}]= \dots = [g_i, g_k] = 1 = [g_{k+1}, g_j] = [g_{k+2}, g_j] = \dots = [g_{j-1}, g_j],
$$ 
then $g_i$ and $g_j$ are in different vertex groups.
\end{enumerate}
  
We say that two reduced words $w_0, w_1$ are equivalent,  if one can obtain $w_1$ from $w_0$ by a permutation of syllables as allowed in the group (that is one can permute the syllables $g_i$ and $g_{i+1}$ if and only if $[g_i, g_{i+1}] =1$).  Clearly the equivalence of $w_0$ to $w_1$ implies that $w_0$ and $w_1$ have the same word length and $w_0 =_G w_1$.  Using $\bigcup_{v\in V} G_v$ as a generating set for $G$ we get a length function $l$ on $G$.

The following result combines the statements of Theorem 3.9 and Corollary 3.13 of \cite{Gre}:
\begin{lemma}\label{Green}  
	Each $g \neq 1$ has a reduced word representation $g =_G g_0g_1\cdots g_{n-1}$ which is unique up to equivalence, with $l(g) = n$.
\end{lemma}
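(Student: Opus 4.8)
The plan is to prove existence of a reduced representative by an explicit reduction procedure, and to prove uniqueness up to equivalence by a van der Waerden style action on the set of reduced words; the length statement will then follow. Since $G$ is generated by $\bigcup_{v\in V}G_v$, every nontrivial $g$ is represented by some word $g_0g_1\cdots g_{n-1}$ in syllables. I would repeatedly apply the following moves: delete any syllable equal to $1$; and whenever two syllables lying in a common vertex group can be brought adjacent by a sequence of shuffles (transpositions of adjacent commuting syllables, as permitted in the group), bring them together, replace them by their product, and delete the result if that product is trivial. A shuffle leaves the number of syllables unchanged while every deletion or merge strictly decreases it, so the procedure terminates. A word admitting no further move satisfies condition (1) (no trivial syllable, and no two adjacent syllables in the same vertex group) and condition (2) (no two same-group syllables can be made adjacent by shuffling), hence is reduced. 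This proves existence and shows, importantly, that reduction never increases syllable length.

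For uniqueness I would work with the set $\mathcal{N}$ of equivalence classes of reduced words, writing $[E]$ for the class of the empty word. For each vertex $v$ and each nontrivial $h\in G_v$ I define an operator on $\mathcal{N}$ as follows. Given a reduced word $w$, condition (2) ensures there is, up to equivalence, at most one syllable of $w$ in $G_v$ that can be shuffled to the front. If no such syllable exists, set $h\cdot[w]=[hw]$. If such a syllable $g_i\in G_v$ exists, shuffle it to the front and set $h\cdot[w]$ to be the class obtained by replacing $g_i$ with the product $hg_i$, or by deleting that leading syllable altogether when $hg_i=1$. Setting $1\cdot[w]=[w]$, each element of $G_v$ thereby acts as a map $\mathcal{N}\to\mathcal{N}$.

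The heart of the argument is to check three things: that each operator is well defined on equivalence classes; that $h\mapsto(h\cdot)$ is a homomorphism from $G_v$ into the group of permutations of $\mathcal{N}$; and that operators attached to adjacent vertices commute, i.e. $h\cdot(k\cdot[u])=k\cdot(h\cdot[u])$ whenever $\{v,w\}\in E$, $h\in G_v$, $k\in G_w$. Each verification is a finite case analysis on which leading syllables are present, and I expect this last commutation check to be the principal obstacle, since one must control how shuffling a $G_v$-syllable to the front interacts with shuffling a commuting $G_w$-syllable. Granting these facts, the universal property of the graph product --- it is the free product of the $G_v$ modulo the relations identifying commutators across edges --- produces a well-defined action of $G$ on $\mathcal{N}$. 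The orbit map $g\mapsto g\cdot[E]$ sends any $g$ with reduced representative $g_0\cdots g_{n-1}$ to $[g_0\cdots g_{n-1}]$, because acting by the syllables in turn reconstructs exactly that class; hence two reduced representatives of the same $g$ determine the same element of $\mathcal{N}$, which is uniqueness up to equivalence. Finally, since equivalence preserves the number of syllables and reduction never increases it, the common value $n$ for reduced representatives of $g$ is the minimal length of a word representing $g$, namely $l(g)$.
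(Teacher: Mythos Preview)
The paper does not supply a proof of this lemma at all: it is stated as a quotation of Theorem~3.9 and Corollary~3.13 of Green's thesis \cite{Gre}, so there is no in-paper argument to compare against. Your proposal is a correct sketch of the standard proof of the normal form theorem for graph products, and it is essentially the argument one finds in the cited source: existence via a terminating rewriting procedure (shuffle--merge--delete), and uniqueness via a van~der~Waerden style action of $G$ on the set $\mathcal{N}$ of equivalence classes of reduced words, obtained from the universal property once one checks that each $G_v$ acts and that actions of adjacent vertex groups commute. Your use of condition~(2) to guarantee at most one $G_v$-syllable can be shuffled to the front is exactly right (take $k=i$ in the statement of condition~(2) if the $i$-th and $j$-th syllables both lie in $G_v$ and the $j$-th commutes with everything before it). The only point worth flagging is that the ``well defined on equivalence classes'' and ``adjacent vertex groups commute'' verifications, which you correctly identify as the crux, genuinely require some care with the case analysis; but there is no missing idea.
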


The following lemma strengthens \cite[Lemma 2]{Du}.

\begin{lemma}\label{graphDud}  If each of the groups $\{G_v\}_{v\in V}$ has a Dudley norm then the graph product $G = \Gamma(\{G_v\}_{v\in V})$ also has a Dudley norm.
\end{lemma}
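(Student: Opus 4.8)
The plan is to build a candidate Dudley norm directly from the reduced-word machinery of Lemma \ref{Green} and then check the four axioms, the Dudley inequality being the substantive one.

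For each vertex $v$ let $L_v$ denote a Dudley norm on $G_v$. Given $g\neq 1$ with reduced word $g =_G g_0g_1\cdots g_{n-1}$ (where $g_i$ lies in the vertex group $G_{v_i}$), I would set $L(g) = \sum_{i=0}^{n-1} L_{v_i}(g_i)$ and $L(1_G)=0$. By Lemma \ref{Green} the reduced word is unique up to permutation of commuting syllables, so the multiset $\{g_i\}$ — and hence both $L(g)$ and the syllable length $l(g)=n$ — is a genuine invariant of $g$. Since each $L_{v_i}$ takes positive integer values on nontrivial elements (as $L_{v_i}(g_i)\geq\max\{1,L_{v_i}(g_i)\}\geq 1$), I would record the useful inequality $L(g)\geq l(g)$. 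Axioms (1) and (2) of a length function are then immediate: the empty word gives $L(1_G)=0$, while $g^{-1}$ has reduced word $g_{n-1}^{-1}\cdots g_0^{-1}$, whose syllables have the same $L_{v_i}$-values.

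For subadditivity I would extend $L$ to arbitrary (not necessarily reduced) words by the same summation and then invoke the reduction procedure underlying Lemma \ref{Green}: any word may be brought to reduced form by shuffling adjacent commuting syllables and by amalgamating two adjacent syllables lying in a common vertex group into their product (deleting a syllable that becomes trivial). Shuffles leave the extended $L$ unchanged, an amalgamation replaces $L_v(a)+L_v(b)$ by $L_v(ab)\leq L_v(a)+L_v(b)$, and a deletion removes only a nonnegative contribution; hence the extended $L$ never increases under reduction. Applying this to the concatenation of reduced words for $g$ and $h$, whose extended value is $L(g)+L(h)$, and using uniqueness of the reduced form of $gh$, yields $L(gh)\leq L(g)+L(h)$.

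The heart of the matter is the Dudley inequality $L(g^n)\geq\max\{n,L(g)\}$ for $g\neq 1$ and $n\geq 1$. I would first write $g = pcp^{-1}$ with $c$ cyclically reduced and the product reduced, so that $g^n = pc^np^{-1}$ is again reduced and $L(g^n)=2L(p)+L(c^n)$ while $L(g)=2L(p)+L(c)$; this reduces the claim to $L(c^n)\geq\max\{n,L(c)\}$ for cyclically reduced $c$. For such $c$ I would decompose its support $S$ according to the join (equivalently, complement-connectivity) structure of the induced subgraph $\Gamma[S]$: writing $\Gamma[S]$ as a join of the subgraphs on the complement-components $S_1,\dots,S_k$, the subgroup generated by $S$ is the direct product of the subgroups generated by the $S_i$, and correspondingly $c=\prod_i c^{(i)}$ with commuting factors, $c^n=\prod_i (c^{(i)})^n$, so $L(c^n)=\sum_i L\big((c^{(i)})^n\big)$ and $L(c)=\sum_i L(c^{(i)})$. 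Each block $S_i$ is then either a single vertex $v$, in which case $c^{(i)}\in G_v$ is one syllable and the Dudley property of $L_v$ gives $L((c^{(i)})^n)\geq\max\{n,L_v(c^{(i)})\}$, or $S_i$ is complement-connected of size at least two, in which case $c^{(i)}$ is a genuinely non-commuting cyclically reduced element whose powers do not collapse: $(c^{(i)})^n$ has reduced word equal to $n$ concatenated copies of that of $c^{(i)}$, so $L((c^{(i)})^n)=nL(c^{(i)})\geq\max\{n,L(c^{(i)})\}$. Summing the blockwise estimates gives $L(c^n)\geq\max\{n,L(c)\}$, and hence the result.

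The main obstacle is exactly the structural input of the last paragraph: that a cyclically reduced element's support splits as a join into complement-components inducing a direct product decomposition, and — crucially — that on a complement-connected support of size at least two the reduced length satisfies $l(c^n)=n\,l(c)$ with the syllable multiset of $c^n$ equal to $n$ copies of that of $c$. This is where the commuting relations of the graph product must be controlled, and it is the point at which the present statement genuinely strengthens \cite[Lemma 2]{Du}, whose free-product setting is the edgeless special case in which every nontrivial cyclically reduced element is either a single syllable or already has non-collapsing powers. I expect establishing this non-collapse through the reduced-word calculus of Lemma \ref{Green} — tracking which syllables can be shuffled adjacent across successive copies of $c$ — to be the technical crux; the verification that $pc^np^{-1}$ stays reduced is a similar but milder consequence of cyclic reducedness.
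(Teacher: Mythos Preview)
Your proposal is correct and follows the same overall architecture as the paper: define $L$ as the sum of the vertex Dudley norms over the syllables of a reduced word, verify the length-function axioms, then pass to the conjugate $g = pcp^{-1}$ with $c$ cyclically reduced and analyse $L(c^n)$.

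The differences are in how the supporting structural facts are obtained. For the triangle inequality the paper simply cites \cite[Theorem 2.8.7]{Goda}, whereas you argue directly that the elementary reduction moves (shuffle, amalgamate, delete) can only decrease the extended $L$; this is a clean self-contained replacement for the citation. For the Dudley inequality on a cyclically reduced $c$, the paper uses the coarse dichotomy ``all syllables of $c$ pairwise commute'' versus ``not all commute'': in the first case $c^n$ has the same syllable count as $c$ with each syllable raised to the $n$-th power, and in the second the reduced form of $c^n$ has at least $2n$ syllables (again invoking \cite{Goda} for the cyclic normal form). Your join decomposition of the support into complement-components is a genuine refinement of this dichotomy: the paper's first case is exactly the situation where every component is a singleton, and your ``non-collapsing'' claim for a complement-connected block of size $\geq 2$ is a sharper version of the paper's syllable-count bound. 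What you gain is a more transparent proof of $L(c^n)\geq L(c)$ in the mixed case (which the paper asserts rather tersely); what the paper gains is brevity by leaning on \cite{Goda}. The non-collapse fact you flag as the crux is indeed the substantive point, and it is standard in the graph-product literature that a cyclically reduced element whose support admits no nontrivial join has $l(c^n)=n\,l(c)$.
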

\begin{proof}  
For each $v\in V$ let $L_v$ be a Dudley norm for $G_v$.  We define a length function $L: G\rightarrow \omega$.  Given $g =_G g_0g_1\cdots g_{n-1}$ in reduced form we let $L(g) = \sum_{i=0}^{n-1} L_{v_i}(g_i)$ where $g_i\in G_{v_i}$.  Certainly $L(1_G) = 0$ and $L(g) = L(g^{-1})$. The triangle inequality follows immediately from  \cite[Theorem 2.8.7.]{Goda} and definition of $L$ on $G$. This completes the proof of the triangle inequality and we have that $L$ is indeed a length function.  

It remains to show that $L(g^n) \geq \max\{n, L(g)\}$ for each $g\neq 1$ and $n\in \omega$.  We will use the fact along the way that each of the groups $G_v$ is torsion-free.

Let $g\neq 1$ and write $g =_G g_0\cdots g_{m-1}$ in reduced form. By \cite[Lemma 2.11.1]{Goda}, upto relabling, $g$ can be uniquely written as 
$$
g=(g_0\cdots g_p) \cdot (g_{p+1}\cdots g_{m-(p+2)})\cdot (g_{m-(p+1)}\cdots g_{m-1}),
$$
where $g_i=g_{m-(i+1)}^{-1}$ for all $i=0,\dots, p$ and $g_{p+1}\cdots g_{m-(p+2)}$ is cyclically reduced (see \cite{Gre, Goda} for definition). By definition, it follows that
$$
g^n=g_0\cdots g_p \cdot (g_{p+1}\cdots g_{m-(p+2)})^n\cdot g_{m-(p+1)}\cdots g_{m-1}
$$
is a reduced form for $g^n$. 
If all the syllables of $g_{p+1}\cdots g_{m-(p+2)}$ pairwise commute, then 
$$
L(g^n) = 2L(g_0\cdots g_p) + L(g_{p+1}^n\cdots g_{m-(p+2)}^n)= 2L(g_0\cdots g_p) + \sum_{i = p+1}^{m-(p+2)} L_{v_i}(g_i^n).
$$
The last expression is at least $n$ since $L_{v_{p+1}}(g_{p+1}^n) \geq n$.  Since $L_{v_i}(g_i^n) \geq L_{v_i}(g_i)$ for each $p+1\leq i\leq m-(p+2)$ this last expression is at least $L(g)$ as well.  Thus the desired inequality holds in this case.

Assume now that the syllables of $g_{p+1}\cdots g_{m-(p+2)}$ do not pairwise commute. In this case, the reduced form of $(g_{p+1}\cdots g_{m-(p+2)})^n$ has no less than $2n$ syllables. Hence, $g^n$ has $L$ length $\geq n$ and that $L(g^n)\geq L(g)$.  The proof is now complete.
\end{proof}

\begin{corollary}\label{RAAGS}  
	Right-angled Artin groups of cardinality $<2^{\aleph_0}$ have a Dudley norm and thus satisfy (**).
\end{corollary}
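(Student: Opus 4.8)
The plan is to realize every right-angled Artin group as a graph product to which Lemma \ref{graphDud} applies, and then to feed the resulting Dudley norm into the preceding lemma characterizing (**). By definition a right-angled Artin group $G = \Gamma(\{G_v\}_{v\in V})$ is precisely the graph product in which each vertex group $G_v$ is infinite cyclic, i.e. a copy of $\mathbb{Z}$. So the first, and essentially only, substantive step is to exhibit a Dudley norm on $\mathbb{Z}$.

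Writing a vertex group multiplicatively as $\langle t\rangle$, so that its elements are the powers $t^a$ with $a\in\mathbb{Z}$, I would take $L(t^a) = |a|$. This evidently satisfies $L(1) = 0$, the symmetry $L(g) = L(g^{-1})$, and the triangle inequality, since $|a+b|\leq |a|+|b|$, and it takes values in $\omega$. For the Dudley condition, given $t^a\neq 1$, so $a\neq 0$ and hence $|a|\geq 1$, and given $n\geq 1$, one computes $L((t^a)^n) = |na| = n|a| \geq \max\{n, |a|\} = \max\{n, L(t^a)\}$. Thus $\mathbb{Z}$ carries a Dudley norm.

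With this verification in hand, Lemma \ref{graphDud} immediately produces a Dudley norm on an arbitrary right-angled Artin group, with no hypothesis on cardinality. To obtain the conclusion that $G$ satisfies (**), I would then impose $|G| < 2^{\aleph_0}$ and apply the lemma asserting that any group of cardinality $< 2^{\aleph_0}$ admitting a Dudley norm satisfies (**). This assembles into the corollary.

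Since the genuine work is already done in Lemma \ref{graphDud} together with the Dudley-norm lemma, I do not expect any real obstacle here: the corollary is an application of those two results. The only point meriting care is the routine check that $|a|$ is a Dudley norm on $\mathbb{Z}$, and in particular that it takes only natural-number values. This integrality is exactly what the discrete cyclic vertex groups buy us, and it is the reason the argument cannot be run verbatim with divisible groups such as $\mathbb{Q}$ or $\mathbb{R}$ as vertex groups.
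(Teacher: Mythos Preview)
Your proposal is correct and is exactly the argument the paper has in mind: the corollary is stated without proof because it follows immediately from Lemma~\ref{graphDud} applied with each vertex group equal to $\mathbb{Z}$ (which carries the Dudley norm $|a|$), together with the preceding lemma that a Dudley norm on a group of cardinality $<2^{\aleph_0}$ yields condition (**). Your explicit verification that $|a|$ is a Dudley norm on $\mathbb{Z}$ is the only detail left implicit in the paper.
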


We also have the following (compare \cite[Theorem 3.3]{ConCor}):

\begin{lemma}\label{graphprodstar}  
	If $\Gamma$ has finitely many vertices and each $G_v$ satisfies (*) then the graph product $G = \Gamma(\{G_v\}_{v\in V})$ also satisfies (*).  The similar claim holds for (**).
\end{lemma}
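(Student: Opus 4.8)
The plan is to induct on the number $|V|$ of vertices, proving the sharper statement that $G$ satisfies condition (3) of (*) (respectively (**)) with the explicit constant $j = \prod_{v\in V} j_v$, where $j_v$ is the constant supplied by (*) (respectively (**)) for $G_v$; carrying the constant explicitly is what lets the induction compose. The cardinality and torsion-freeness requirements are routine, since a graph product of torsion-free groups is torsion-free and, with $V$ finite, $|G| = \max\{\aleph_0, \max_v |G_v|\} < 2^{\aleph_0}$. The base case $|V|=1$ is the hypothesis on $G_v$. For the inductive step I first dispose of the case that $\Gamma$ is complete: then $G = \prod_{v\in V} G_v$ is a direct product, hence an iterated extension of its vertex groups, so repeated application of Lemma \ref{extensions} (and its (**) analogue) with the inductive hypothesis gives the claim, the constants multiplying to exactly $\prod_v j_v$. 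So assume henceforth that $\Gamma$ is not complete and fix $g\in G\setminus\{1_G\}$.

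The first case is that $g$ is conjugate into a \emph{proper} standard subgroup $G_S = \langle G_v : v\in S\rangle$ with $S\subsetneq V$; this includes every $g$ conjugate into a single vertex group, and (since $\Gamma$ is not complete, so every clique is proper) every $g$ conjugate into a direct factor over a clique. Writing $g = hah^{-1}$ with $a\in G_S\setminus\{1_G\}$ and conjugating the whole system by $h^{-1}$, a sequence $\{b_m\}$ solves $y_m = g^jy_{m+1}^{k_m}$ if and only if $\{h^{-1}b_mh\}$ solves $y_m = a^jy_{m+1}^{k_m}$, and in the (**) formulation the fixed value $b_0$ is merely conjugated. Applying the retraction $\rho_S\colon G\to G_S$ (which fixes $a$) sends any such solution to a solution of the \emph{same} system inside $G_S$. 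As $G_S$ is the graph product on the induced subgraph on $S$, by the inductive hypothesis it satisfies (*) (respectively (**)) with constant $\prod_{v\in S}j_v \mid j$; writing $a^j = \bigl(a^{\,j/\prod_{v\in S}j_v}\bigr)^{\prod_{v\in S}j_v}$ and feeding the nontrivial element $a^{\,j/\prod_{v\in S}j_v}$ into that hypothesis produces the sequence $\{p_m\}$ precluding any solution in $G_S$, hence, via $\rho_S$, any solution in $G$. It is here, and only here, that the root-extraction content of (*)/(**) for the vertex groups is used.

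The remaining case is that $g$ is not conjugate into any proper standard subgroup. Because $\Gamma$ is not complete, Lemma \ref{Green} then forces $g$ to be conjugate to a cyclically reduced word whose syllables do not all pairwise commute, so after conjugating the system as above we may assume $g$ itself is such a word; the estimate in the proof of Lemma \ref{graphDud} then gives $l(g^n)\ge 2n$ for all $n$. Thus, with respect to the length function $l$, the element $g$ behaves exactly like an element on which a Dudley norm grows, even though $l$ is not a Dudley norm. The plan is therefore to choose $\{p_m\}$ growing quickly (e.g.\ $p_m = (m+2)(l(g^j)+1)$) and to run the length bookkeeping of the proof that a Dudley norm implies (**): left multiplication by the long, cyclically reduced, non-peripheral element $g^j$ increases $l$ by roughly $l(g^j)$ at each step, and this propagates through the nested powers in $y_m = g^jy_{m+1}^{k_m}$ to force $l(b_0)$ past any prescribed bound. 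For (**) this yields, for each fixed $b_0$, an $M$ with no finite solution; for (*) the same growth shows the infinite system has no solution at all.

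The step I expect to be genuinely delicate is this last length bookkeeping. Unlike the clean Dudley-norm setting, the intermediate unknowns $b_{m+1}$ may themselves be peripheral (conjugate into vertex groups), so that $b_{m+1}^{k_m}$ stays short, and one must control the cancellation that can occur in the reduced form of $g^jb_{m+1}^{k_m}$ before concluding that prepending $g^j$ nets a definite length increase. Handling this requires a careful use of the normal-form uniqueness of Lemma \ref{Green} to bound how much of $g^j$ can be absorbed by $b_{m+1}^{k_m}$, and is where the bulk of the computation lies. The (*) and (**) statements run in exact parallel throughout, the only difference being whether one precludes the infinite system outright or, for each given $b_0$, a sufficiently long finite truncation.
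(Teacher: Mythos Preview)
Your Case 1 (when $g$ is conjugate into a proper standard subgroup) is correct, but Case 2 contains a genuine gap rather than merely deferred bookkeeping. You identify the obstruction yourself: the intermediate $b_m$ may be conjugate into a single vertex group, in which case $l(b_m^{k})=l(b_m)$ fails to grow with $k$. This is not a technicality to be cleaned up by normal-form analysis; the syllable length $l$ is simply \emph{not} a Dudley norm on $G$, and the chain of inequalities you want to propagate collapses at any peripheral $b_m$. Nothing in your setup rules out arbitrarily long finite solutions in which every $b_m$ is peripheral, and indeed in a free product $G_u*G_v$ one can have $g^j$ non-peripheral while $b_m$ and $b_{m+1}$ are both peripheral (e.g.\ $g^j=b_m b_{m+1}^{-k_m}$ with $b_m,b_{m+1}$ in different conjugates of factors). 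So the plan ``run the Dudley argument anyway'' does not close.

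The paper avoids this entirely with a different decomposition. Let $\sigma\colon G\to\bigoplus_{v\in V}G_v$ be the surjection that makes all vertex groups commute. The direct sum satisfies (*) (respectively (**)) by iterated use of Lemma~\ref{extensions}, so by that same lemma it suffices to show $\ker(\sigma)$ satisfies (*). The key observation is that on $\ker(\sigma)$ the syllable length $l$ \emph{is} a Dudley norm: any nontrivial $g\in\ker(\sigma)$ has cyclically reduced part whose syllables cannot all pairwise commute (else $\sigma(g)\neq 1$), so $l(g^n)\ge\max\{n,l(g)\}$ by the same count as in Lemma~\ref{graphDud}. The peripheral unknowns that break your Case~2 are thus handled not by a length argument at all but by Lemma~\ref{extensions}, whose proof forces all $b_m$ into $\ker(\sigma)$ via torsion-freeness and absence of $\mathbb{Q}$ in the quotient. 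In effect, your Case~2 is attempting to reprove Lemma~\ref{extensions} by hand inside $G$, in a setting where the needed norm does not exist.
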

\begin{proof}  We prove the (*) claim, and the (**) claim follows along precisely the same lines.  Assume the hypotheses.  Let $\sigma: G \rightarrow \bigoplus_{v\in V} G_v$ be the map which adds relators causing all elements in distinct vertex groups to commute.  We have by Lemma \ref{extensions} and by induction on $|V|$ that the group $\bigoplus_{v\in V} G_v$ satisfies (*).  Then by Lemma \ref{extensions} it suffices to prove that $\ker(\sigma)$ satisfies (*).  We will show, in fact, that the length function $l$ which counts the number of syllables is a Dudley norm when restricted to $\ker(\sigma)$.  We must show that for $n\in \omega$ and $g\in \ker(\sigma) \setminus \{1\}$ we have $l(g) \geq \max\{n, l(g)\}$.

Suppose $g\in\ker(\sigma) \setminus \{1\}$ has reduced form $g =_G g_0\cdots g_{m-1}$.  As in the proof of Lemma \ref{graphDud}, upto relabling, $g$ can be uniquely written as 
$$
g=(g_0\cdots g_p) \cdot (g_{p+1}\cdots g_{m-(p+2)})\cdot (g_{m-(p+1)}\cdots g_{m-1}),
$$
where $g_i=g_{m-(i+1)}^{-1}$ for all $i=0,\dots, p$ and $g_{p+1}\cdots g_{m-(p+2)}$ is cyclically reduced. By definition, it follows that
$$
g^n=g_0\cdots g_p \cdot (g_{p+1}\cdots g_{m-(p+2)})^n\cdot g_{m-(p+1)}\cdots g_{m-1}
$$
is a reduced form for $g^n$. 
Since $g\in\ker(\sigma)$, so the syllables of $g_{p+1}\cdots g_{m-(p+2)}$ do not commute pairwise. As we already noticed above, in this case the reduced form of $(g_{p+1}\cdots g_{m-(p+2)})^n$ has no less than $2n$ syllables. Thus the length is a Dudley norm.
\end{proof}

We give the following definition for establishing still another class of groups which satisfy (**):

\begin{definition}  For each $g\in G$ we let 
$$
\Roots(g) = \{h\in G\mid (\exists n\in \omega\setminus\{0\})[h^n = g]\}.
$$
We say that a group $G$ \textit{has finite roots} if for each $g\in G$ the set $\Roots(g)$ is finite.
\end{definition}

\begin{lemma}\label{finiteroots}  If $G$ is countable torsion-free group having finite roots then $G$ satisfies (**).
\end{lemma}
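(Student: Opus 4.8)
The plan is to verify the three parts of conditions (**) with the constant $j=1$. Parts (1) and (2) are immediate: a countable group has cardinality $<2^{\aleph_0}$, and $G$ is torsion-free by hypothesis. For part (3) I would, given $g\in G\setminus\{1_G\}$, choose the sequence $\{p_m\}_{m\in\omega}$ to grow rapidly and to be highly divisible, concretely something like $p_m=2^{m}$ together with the requirement that $p_m>\max\{n\in\omega\mid g^{-1}\text{ is an }n\text{-th power in }G\}$ (a finite bound, since $G$ has finite roots). The whole difficulty is then concentrated in showing that, for such a $\{p_m\}$, no sufficiently long finite system $\{y_m=gy_{m+1}^{k_m}\}_{m=0}^{M}$ with $p_m\mid k_m$ admits a solution extending a prescribed $b_0$.

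First I would reduce part (3) to the nonexistence of a solution of the full infinite system. Fix $b_0$ and consider the tree whose nodes at level $m$ are the tuples $(b_1,\dots,b_m)$ for which $(b_0,\dots,b_m)$ satisfies the first $m$ equations. Any legal successor $b_{m+1}$ of a node ending in $b_m$ must satisfy $b_{m+1}^{k_m}=g^{-1}b_m$, so $b_{m+1}\in\Roots(g^{-1}b_m)$, which is finite; hence the tree is finitely branching. If the finite system had a solution for every $M$, the tree would be infinite, and K\"onig's Lemma would furnish an infinite branch, i.e.\ a solution $\{b_m\}_{m\in\omega}$ of the infinite system $y_m=gy_{m+1}^{k_m}$. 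Thus it suffices to rule out such an infinite solution; the failing level $M$ then supplies the integer required by (**).

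Two elementary facts drive the analysis of an infinite solution. The first is a root-counting monotonicity: if $x=v^{k}$ with $v\neq 1_G$ and $k\geq 2$, then $\Roots(v)\subsetneq\Roots(x)$. Indeed $w^{l}=v$ gives $w^{lk}=x$, so $\Roots(v)\subseteq\Roots(x)$, while $x\in\Roots(v)$ would force $x^{lk}=x$ for some $l$, hence $kl=1$ by torsion-freeness, contradicting $k\geq 2$; thus $x\in\Roots(x)\setminus\Roots(v)$. The second is that if $b_m=g$ for some $m$ then $b_{m+1}=1_G$ and $b_{m+2}^{k_{m+1}}=g^{-1}$, forcing $k_{m+1}$ to be an exponent for which $g^{-1}$ is a power; the bound imposed on $p_m$ excludes this, so along any infinite solution one has $b_{m+1}\neq 1_G$ and $g^{-1}b_m\neq 1_G$ for all $m$. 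Consequently each $g^{-1}b_m=b_{m+1}^{k_m}$ is a proper power, and since the elements $b_{m+1}^{k_m/d}$ for $d\mid k_m$ are distinct roots of $g^{-1}b_m$, one obtains $|\Roots(g^{-1}b_m)|\geq \tau(k_m)\geq\tau(p_m)\to\infty$, where $\tau$ counts divisors.

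The main obstacle is to turn the growth $|\Roots(g^{-1}b_m)|\to\infty$ into a contradiction with the finite-roots hypothesis, which constrains one element at a time. The natural attempt is to chain the strict inclusions $\Roots(b_{m+1})\subsetneq\Roots(g^{-1}b_m)$ into a single strictly descending sequence of finite sets; this is obstructed by the shift by $g^{-1}$, since $\Roots(g^{-1}x)$ need not be contained in $\Roots(x)$ (already in $\mathbb{Z}$ the divisor count of $x-g$ is unrelated to that of $x$). I expect the resolution to use countability together with the finite branching of the tree to show that the reachable elements $\{b_m\}$ cannot proliferate freely, so that a single fixed element is forced to absorb arbitrarily high roots, contradicting finiteness of its root set. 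Making this trapping argument precise---equivalently, supplying the descent that the $g^{-1}$-shift destroys---is the crux, and is where I would spend the bulk of the effort.
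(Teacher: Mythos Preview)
Your K\"onig's-lemma reduction is correct and worth recording: in a group with finite roots the successor relation $b_{m+1}\in\Roots(g^{-1}b_m)$ makes the solution tree finitely branching, so for any fixed $\{k_m\}$ and $b_0$ the existence of solutions to all finite truncations is equivalent to the existence of an infinite solution. Thus, for groups with finite roots, verifying (**) is equivalent to verifying (*).

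The genuine gap is in the second half. Your proposed sequence $p_m$ depends only on $g$ (and on $m$ through $2^m$), and your intended contradiction is that some \emph{single} element is forced to have infinitely many roots. You already correctly identify why this fails: the inclusion $\Roots(b_{m+1})\subsetneq\Roots(g^{-1}b_m)$ does not chain through the shift by $g^{-1}$, and the growth $|\Roots(g^{-1}b_m)|\to\infty$ is not a contradiction since the elements $g^{-1}b_m$ vary. No amount of ``trapping'' along a single infinite branch will produce the desired fixed element; there is nothing to prevent a solution from wandering through ever-new elements with ever-larger root sets.

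The paper's argument supplies exactly the missing idea, and it is of a different shape from what you anticipate. One fixes an enumeration $G=\{g_0,g_1,\ldots\}$ and lets $p_m$ depend on the first $m+1$ elements of $G$: the iterated set $g^{-1}\Roots(g^{-1}\Roots(\cdots g^{-1}\{g_0,\ldots,g_m\}\cdots))$ (with $m$ applications of $\Roots$) is finite by the finite-roots hypothesis, and by torsion-freeness there is a bound $p_m$ such that no nonidentity $k$-th power with $k\ge p_m$ lies in it. Now if $b_0=g_l$, take $M=l+1$. Any finite solution $(b_0,\ldots,b_{M+1})$ has $b_l$ inside the $l$-fold iterated set built from $X_l$, so $b_{l+1}^{k_l}$ lies in a set which by the choice of $p_l$ contains no nonidentity $k_l$-th power; hence $b_{l+1}=1_G$. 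The same reasoning at level $l+1$ gives $b_{l+2}=1_G$, and then $1_G=b_{l+1}=g\,b_{l+2}^{k_{l+1}}=g$ yields the contradiction. So the contradiction is $g=1_G$, not ``some element has infinitely many roots,'' and the decisive move is to let $\{p_m\}$ anticipate the position of $b_0$ in the enumeration rather than to let it depend on $g$ alone.
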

\begin{proof}  Assume $G$ satisfies the hypotheses.  Given any set $X\subseteq G$ we let $\Roots(X) = \bigcup_{g\in G}\Roots(g)$.  Notice that for any finite $X\subseteq G$ there exists $p\in \omega \setminus \{0\}$ such that for all $k\geq p$ and $h\neq 1_G$ we have $h^k \notin X$.

Let $G = \{g_0, \ldots\}$ be an enumeration and for each $m\in \omega$ let $X_m = \{g_0, \ldots, g_m\}$.  Let $g\neq 1_G$ be given.  Select $p_0$ such that for all $k_0 \geq p_0$ and $h\neq 1_G$ we have $h^{k_0}\notin g^{-1}X_0$.  Select $p_1$ such that for all $k_1 \geq p_1$ and $h\neq 1_G$ we have $h^{k_1} \notin g^{-1}\Roots(g^{-1}X_1)$ and generally for $m\in \omega$ select $p_m$ such that for $k_m \geq p_m$ we have $h \neq 1_G$ implies $h^{k_m}\notin g^{-1}\Roots(g^{-1}\Roots(\dots g^{-1}X_m \dots))$, where the operator $\Roots$ appears $m$ times in the expression.

Let $b_0\in G$ be given and suppose $b_0 = g_l$.  Let $M = l+1$ and suppose for contradiction that $b_0, \ldots, b_{M+1}$ is a solution to the set of equations $\{y_m = gy_{m+1}^{k_m}\}_{m=0}^M$.  Since $b_m = gb_{m+1}^{k_m}$ for $0 \leq m\leq M$ we have $b_{m+1} \in \Roots(g^{-1}b_m)$, and so for each $1 \leq m \leq M+1$ we have
$$
b_m \in \Roots(g^{-1}\Roots(g^{-1}\dots  \Roots(g^{-1}b_0)\dots))
$$
\noindent where the operator $\Roots$ appears $m$ times.  In particular
$$
b_l \in \Roots(g^{-1}\Roots (g^{-1} \dots  \Roots(g^{-1}X_l)\dots   ))
$$
\noindent where the operator $\Roots$ appears $l$ times, and so 
$$
b_{l+1}^{k_l} \in g^{-1}\Roots(g^{-1}\Roots(\dots \Roots(g^{-1}X_l) \dots))
$$
\noindent Then $b_{l+1} = 1_G$ by the choice of $p_l$.  We also have
$$
b_{l+1} \in \Roots(g^{-1}\Roots(g^{-1}\dots \Roots(g^{-1}X_{l+1})    \dots)),
$$
\noindent where the operator $\Roots$ appears $l+1$ times, so that
$$
b_{l+2}^{k_{l+1}} \in g^{-1}\Roots(\dots \Roots(g^{-1}X_{l+1}) \dots)
$$
\noindent and again $b_{l+2} = 1_G$.  Now $1_G = b_{l+1} = gb_{l+2}^{k_{l+1}} = g$, a contradiction.
\end{proof}

\begin{corollary}  \
\begin{enumerate}
	\item Let $G$ be a torsion-free group hyperbolic relative to a finite set of subgroups $\{H_\lambda\}$. Suppose that each $H_\lambda$ has finite root extraction, then $G$ satisfies (**).
	\item Thompson's group $F$ satisfies (**).
\end{enumerate}
\end{corollary}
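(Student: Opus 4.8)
The plan is to reduce both parts to Lemma \ref{finiteroots}: in each case it suffices to check that the group is countable, torsion-free, and has finite roots, whereupon the conclusion that it satisfies (**) is immediate. Torsion-freeness is given (part 1) or classical (part 2), so the real work is the finite-roots condition, together with a remark on cardinality.

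For part (1), I would first note that, under the standard convention in which a relatively hyperbolic group is finitely generated relative to its finitely many peripheral subgroups, the $H_\lambda$ are themselves finitely generated, hence $G$ is countable. The heart of the matter is finite roots, for which I would invoke the dichotomy that in a torsion-free relatively hyperbolic group every nontrivial $g$ is either loxodromic or parabolic (conjugate into some $H_\lambda$). If $g$ is loxodromic, its centralizer $C_G(g)$ is virtually cyclic, hence, being torsion-free, infinite cyclic, say $C_G(g)=\langle t\rangle$ with $g=t^k$; any $h$ with $h^n=g$ commutes with $g$, so $h=t^m\in\langle t\rangle$, and $t^{mn}=t^k$ forces $mn=k$, leaving only one root for each divisor $n$ of $k$, hence finitely many. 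If $g$ is parabolic, after conjugating we may assume $g\in H_\lambda\setminus\{1_G\}$; any root $h$ commutes with $g$, so $g=hgh^{-1}\in H_\lambda\cap hH_\lambda h^{-1}$, and almost malnormality of the peripheral subgroups says this intersection is finite whenever $h\notin H_\lambda$. As it contains the infinite-order element $g$, we conclude $h\in H_\lambda$, whence $\Roots_G(g)=\Roots_{H_\lambda}(g)$ is finite by hypothesis; conjugating back gives finite roots for the original $g$. With finite roots established, Lemma \ref{finiteroots} applies.

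For part (2), I would likewise verify the hypotheses of Lemma \ref{finiteroots} for Thompson's group $F$, where countability and torsion-freeness are classical and finite roots is again the key point. Realizing $F$ as the group of orientation-preserving dyadic PL homeomorphisms of $[0,1]$, I would analyze roots component by component: if $h^n=g$ then $\mathrm{Fix}(h)\subseteq\mathrm{Fix}(h^n)=\mathrm{Fix}(g)$, and $h$ permutes the finitely many components of the support of $g$, with the induced permutation of order dividing $n$. On any support component $g$ is fixed-point-free, so its one-sided germs at the endpoints have slopes $2^a,2^b$ with $a\geq 1$ and $b\leq -1$ (a PL map fixing an endpoint with germ slope $1$ would fix a whole adjacent subinterval); a root $h$ fixing that component then has germ exponents $a/n,b/n\in\mathbb{Z}$, bounding $n\leq\min(a,|b|)$. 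This bounds the possible orders $n$, and for each such $n$ and each admissible permutation of the components the rigidity of $F$ (dyadic breakpoints, power-of-two slopes) leaves only finitely many possibilities for $h$; equivalently, one appeals to the known solvability of the root problem in $F$ with finite solution sets. Finite roots in hand, Lemma \ref{finiteroots} yields that $F$ satisfies (**).

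The main obstacle in both parts is the finite-roots verification rather than the final invocation of the lemma. For part (1) the delicate point is the malnormality argument pinning the roots of a parabolic element inside its peripheral subgroup, together with the structure of centralizers of loxodromic elements. For part (2) it is controlling the number of roots on a single support component: the germ bound limits the exponent $n$, but finiteness for each fixed $n$ must still be argued, and here leaning on the fine structure theory of $F$ or on an existing root-extraction result is the cleanest route.
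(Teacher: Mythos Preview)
Your proposal is correct and follows the same approach as the paper: both parts reduce to Lemma \ref{finiteroots} by verifying countability, torsion-freeness, and finite roots. The paper's proof is terser---it simply cites \cite[Theorem 1.14]{O} for the finite-roots property of hyperbolic elements in part (1) and \cite[Theorem 15.11, Lemma 15.29]{GSap} for finite roots in $F$---whereas you sketch these verifications directly (and are in fact more careful than the paper about countability in part (1)); but the strategy is identical.
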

\begin{proof}  
Relatively hyperbolic group $G$ has finite root extraction for hyperbolic elements (see  \cite[Theorem 1.14]{O}), and so if every $H_\lambda$ has finite root extraction, then so does $G$ and hence it satisfies (**) by Lemma \ref{finiteroots}.  

Thompson's group $F$ is also torsion-free and has finite roots (see \cite{GSap} Theorem 15.11 and Lemma 15.29) and is also countable, so we again apply Lemma \ref{finiteroots}.
\end{proof}

We may now apply the results of this section to prove

\begin{A}\label{pureandstandardbraid}  
	All torsion-free virtually poly-free groups satisfy (**). In particular, so do spherical Artin groups of type  $A_n$, $B_n$, $D_n$, $I_2(p)$, and $F_4$.
\end{A}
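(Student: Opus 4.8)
The plan is to assemble the conclusion from the closure lemmas already in hand, proceeding in three stages --- free groups, then poly-free groups, then the virtually poly-free case --- and only at the very end to import structural facts about the specific Artin groups. I begin by recording that any free group of cardinality $<2^{\aleph_0}$ satisfies (**): the word-length function is a Dudley norm on a free group \cite{Du}, so the preceding lemma (a group of cardinality $<2^{\aleph_0}$ carrying a Dudley norm satisfies (**)) applies directly. Throughout I take the standing hypothesis $|G|<2^{\aleph_0}$ of (**) for granted; for the groups of interest everything is finitely generated, hence countable, and in any case an extension of a group of cardinality $<2^{\aleph_0}$ by another such group again has cardinality $<2^{\aleph_0}$, so the bound is preserved at every step below.

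Next I would treat poly-free groups by induction on the length $n$ of a subnormal series $1 = G_0 \triangleleft G_1 \triangleleft \cdots \triangleleft G_n = G$ with each quotient $G_{i+1}/G_i$ free. The cases $n\le 1$ are exactly the free case just handled. For the inductive step, $G_{n-1}$ is poly-free of length $n-1$ and so satisfies (**) by the inductive hypothesis, while $G/G_{n-1}$ is free and so satisfies (**) by the base case. Applying Lemma \ref{extensions} to the short exact sequence $1 \to G_{n-1} \to G \to G/G_{n-1} \to 1$ then gives that $G$ satisfies (**). Torsion-freeness requires no separate argument, since Lemma \ref{extensions} guarantees it automatically; in particular every poly-free group is torsion-free.

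For the virtually poly-free case, let $G$ be torsion-free and contain a poly-free subgroup $H$ of finite index. Since $H$ need not be normal, I would replace it by its normal core $N = \bigcap_{g\in G} gHg^{-1}$, which is normal in $G$ and still of finite index because $[G:H]<\infty$. The essential point is that $N$, being a subgroup of the poly-free group $H$, is itself poly-free: intersecting the series for $H$ with $N$ produces a subnormal series whose successive quotients embed into the free quotients of $H$, and subgroups of free groups are free by Nielsen--Schreier. Hence $N$ satisfies (**) by the previous stage. Now $G/N$ is finite, so it is of bounded exponent with $|G/N|<2^{\aleph_0}$, and $G$ is torsion-free by hypothesis; thus Lemma \ref{finiteextension} applies to $1 \to N \to G \to G/N \to 1$ and yields that $G$ satisfies (**). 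This establishes the general statement.

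Finally, for the named spherical Artin groups I would invoke the relevant input from the literature: each is torsion-free and each is virtually poly-free. For type $A_n$ this is the classical statement that the pure braid group is an iterated extension of free groups, with the full braid group a finite extension of it by the symmetric group \cite{A}; for types $B_n$, $D_n$, $I_2(p)$, and $F_4$ one uses the corresponding known (virtual) poly-freeness of these finite-type Artin groups, after which the general statement applies verbatim. I expect the main obstacle to lie not in the group-theoretic bookkeeping, which is routine once Lemmas \ref{extensions} and \ref{finiteextension} are available, but in marshalling this structural input; the only genuinely delicate internal point is the verification that passing to the normal core preserves poly-freeness, which rests on the permanence of freeness under taking subgroups.
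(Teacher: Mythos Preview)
Your proof is correct and follows essentially the same route as the paper: free groups satisfy (**) via the Dudley norm, poly-free groups then satisfy (**) by iterating Lemma~\ref{extensions}, and the torsion-free virtually poly-free case follows from Lemma~\ref{finiteextension}, with the named Artin groups handled by citing their known virtual poly-freeness. Your explicit passage to the normal core and the Nielsen--Schreier argument showing that subgroups of poly-free groups are poly-free fill in a detail the paper leaves implicit, but the overall strategy is identical.
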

\begin{proof}
Indeed, poly-free groups satisfy (**) by Corollary \ref{RAAGS} and Lemma \ref{extensions}. Torsion-free virtually poly-free groups satisfy (**) by Lemma \ref{finiteextension}. Finally, spherical Artin groups of type $A_n$, $B_n$, $D_n$, $I_2(p)$ and $F_4$ are virtually poly-free, see \cite{brieskorn}. 
\end{proof}

\begin{remark}\label{Whyboth}  
Interestingly there exist countable groups which are n-, cm-, and lcH-slender which do not satisfy (**).  A countable group $G$ is constructed in \cite[Example 3]{Cor1} which is an amalgamated free product of copies of $\mathbb{Z}$ which has a nontrivial element having an $n$-th root for each $n\in \omega \setminus\{0\}$.  We relate the construction of this group and demonstrate why it satisfies the aforementioned conditions.

For each $n\geq 1$ let $G_n$ be a copy of the integer group $\mathbb{Z}$.  For each $n\geq 2$ we let $H_n$ be the unique subgroup of $G_n$ of index $n$.  For each $n\geq 2$ let $\psi_n: H_n \rightarrow G_1$ be an isomorphism (there are only two, pick one).  Let $G$ be the amalgamated free product over the groups $\{G_n\}_{n\geq 1}$ obtained by identifying all $H_n$ to $G_1$ via the maps $\psi_n$.  The subgroup $G_1$ is central in $G$.  Also, the generating element $1 \in G_1$ clearly has an $n$-th root in $G$ for each $n\in \omega \setminus \{0\}$, so by Lemma \ref{norootsfordoublestar} the group $G$ does not satisfy (**).  Notice however that $G$ fits in the short exact sequence
$$
1 \rightarrow G_1 \rightarrow G \rightarrow^q *_{n\geq 2}(\mathbb{Z}/n\mathbb{Z}) \rightarrow 1.
$$
Now suppose $\phi: H \rightarrow G$ is a homomorphism with $H$ completely metrizable.  If $q\circ \phi(H)$ lies entirely in a conjugate of, say, $\mathbb{Z}/n\mathbb{Z}$ then $\phi(H)$ lies entirely in a conjugate of $G_n$.  Since $G_n \simeq \mathbb{Z}$ we immediately see that $\ker(\phi)$ is open since $\mathbb{Z}$ is cm-slender.  Otherwise we have by Theorem \ref{Slutsky} that $\ker(q\circ \phi)$ is open in $H$.  Now $\ker(q\circ\phi)$ is a completely metrizable group, $\ker(\phi) \leq \ker(q \circ \phi)$, and $\phi\upharpoonright \ker(q\circ \phi)$ has image in $\ker(q) = G_1$.  But since $G_1 \simeq \mathbb{Z}$ we get that $\ker(\phi)$ is open in $\ker(q\circ \phi)$, and therefore also open in $H$.  Either way we conclude that $\ker(\phi)$ is open in $H$.  The proof when $H$ is locally compact Hausdorff is entirely analogous.  When $H$ is $\HEG$ we use \cite[Theorem 1.3]{E2} (as is done in \cite[Theorem 4.4]{Cor1}).
\end{remark}

The group $B_m$ embeds as a subgroup of $B_{m+1}$ naturally by fixing a strand in $B_{m+1}$ and having $B_m$ warp all other strands appropriately.  It is interesting to note that our proof does not lend itself to a proof that the direct limit $\varinjlim B_m$ is slender.

\begin{question}  Does n-, cm-, or lcH-slenderness hold for the group $\varinjlim B_m$?
\end{question}

\end{section}

\begin{section}{Theorem \ref{residualslender}}\label{TheoremB}

We state a result which follows immediately from a structure theorem of Cleary and Morris (see \cite[Theorem 1, Remark (ii)]{ClMo}).

\begin{theorem}\label{Morris}  
	If $G$ is a locally compact Hausdorff group then $G$ is homeomorphic to a product $\mathbb{R}^n \times K \times D$ where $K$ is a compact subgroup of $G$ and $D$ is a discrete space.
\end{theorem}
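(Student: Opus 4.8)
The plan is to invoke the standard structure theory of locally compact groups, using the Cleary--Morris result to supply the analytically deep part and deriving the stated homeomorphism by a short reduction. First I would peel off the discrete factor. Let $G^0$ denote the connected component of the identity; it is a closed normal subgroup, and the quotient $G/G^0$ is a totally disconnected locally compact group. By van Dantzig's theorem $G/G^0$ contains a compact open subgroup $L$, and I let $H \leq G$ be its full preimage under the quotient map $G \to G/G^0$. Then $H$ is an \emph{open} subgroup of $G$ containing $G^0$, the quotient $H/G^0 \cong L$ is compact (so $H$ is almost connected), and $G/H \cong (G/G^0)/L$ is discrete because $L$ is open. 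Since $H$ is open, its cosets are open and partition $G$, so left translations identify $G$ with the topological disjoint union of copies of $H$ indexed by $G/H$; choosing a transversal yields a homeomorphism $G \cong H \times D$ with $D = G/H$ carrying the discrete topology.

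Second I would decompose the almost connected group $H$. The essential input is the Cartan--Iwasawa--Malcev theorem in the locally compact setting: an almost connected locally compact group possesses a maximal compact subgroup $K$ and is homeomorphic to $\mathbb{R}^n \times K$ for some $n\in\omega$. This is exactly the content packaged by the structure theorem of Cleary and Morris, which combines the Gleason--Yamabe solution to Hilbert's fifth problem with the Iwasawa decomposition of connected Lie groups. Remark (ii) of \cite{ClMo} guarantees, moreover, that $K$ may be taken to be a genuine compact \emph{subgroup} of $G$ rather than merely an abstract compact space, which is what the statement demands. Concatenating the two homeomorphisms gives $G \cong H \times D \cong \mathbb{R}^n \times K \times D$, as required.

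The main obstacle lies entirely in the second step: producing the $\mathbb{R}^n \times K$ splitting of an almost connected group rests on the full weight of the solution to Hilbert's fifth problem, and it is precisely this machinery that we import wholesale from \cite{ClMo}. By contrast, the first step --- detaching the discrete factor $D$ via van Dantzig's theorem and the openness of $H$ --- is elementary and purely topological. The only point requiring care in assembling the pieces is to check that the chosen transversal furnishes a homeomorphism and not merely a bijection; this is immediate once one notes that $H$ is open, so each coset is an open set mapped homeomorphically onto $H$ by translation and the index set $G/H$ is discrete.
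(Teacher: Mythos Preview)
Your argument is correct, but note that the paper does not actually prove this statement: it is stated without proof, with the remark that it ``follows immediately from a structure theorem of Cleary and Morris (see \cite[Theorem 1, Remark (ii)]{ClMo}).'' In other words, the authors simply quote the result from \cite{ClMo} rather than deriving it, whereas you have supplied an outline of the derivation. Your two-step reduction (van Dantzig to extract an open almost-connected subgroup $H$ and a discrete coset space $D$, then the Cartan--Iwasawa--Malcev/Gleason--Yamabe machinery for $H$) is the standard route to results of this type and is essentially how the Cleary--Morris theorem itself is proved; you then cite \cite{ClMo} for the deep second step anyway, so in substance your proposal and the paper agree on where the content lies. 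The extra detail you provide about peeling off $D$ is correct and would be welcome exposition, but it is not something the paper undertakes.
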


\begin{lemma}\label{closedkernel}  
	If $\phi: H \rightarrow G$ is a group homomorphism with $H$ a completely metrizable group (respectively locally compact Hausdorff group) and $G$ is residually cm-slender (resp. residually lcH-slender) then the kernel of $\phi$ is closed.
\end{lemma}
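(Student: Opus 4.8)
The plan is to exhibit $\ker(\phi)$ as an intersection of open subgroups of $H$, each of which is automatically closed. I treat the completely metrizable case; the locally compact Hausdorff case is word-for-word identical, replacing \emph{cm-slender} by \emph{lcH-slender}, since the only property of the domain used is that homomorphisms from it into slender groups have open kernel.

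First I would unpack residual cm-slenderness: for each $g \in G \setminus \{1_G\}$ there is a homomorphism $\psi_g \colon G \to C_g$ to a cm-slender group $C_g$ with $\psi_g(g) \neq 1_{C_g}$. For each such $g$ consider the composite $\psi_g \circ \phi \colon H \to C_g$. Its domain $H$ is completely metrizable and its codomain $C_g$ is cm-slender, so by the very definition of cm-slenderness the subgroup $K_g := \ker(\psi_g \circ \phi)$ is open in $H$.

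Next I would establish the identity $\ker(\phi) = \bigcap_{g \in G \setminus \{1_G\}} K_g$. The inclusion $\subseteq$ is immediate, since $\phi(h) = 1_G$ forces $\psi_g(\phi(h)) = 1_{C_g}$ for every $g$. For the reverse inclusion, suppose $h$ lies in every $K_g$ yet $\phi(h) \neq 1_G$; applying the separating homomorphism with the particular index $g = \phi(h)$, the defining property $\psi_{\phi(h)}(\phi(h)) \neq 1$ contradicts $h \in K_{\phi(h)}$. Hence $\phi(h) = 1_G$, which proves $\supseteq$.

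Finally I would invoke the standard fact that an open subgroup of a topological group is closed (its complement is a union of cosets, each a translate of an open set, hence open). Thus each $K_g$ is closed, and $\ker(\phi)$, being an intersection of closed sets, is closed. The argument is purely formal, so I do not anticipate a genuine obstacle; the one point requiring care is the direction $\supseteq$, where the separating homomorphism must be indexed by the offending value $\phi(h)$ itself. I would also note that no cardinality hypothesis on $G$ and none of the structural information of Theorem \ref{Morris} is needed for this lemma, that input being reserved for later in the section.
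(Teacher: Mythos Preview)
Your proof is correct and follows essentially the same approach as the paper: both exhibit $\ker(\phi)$ as the intersection of the (clopen) kernels $\ker(\psi\circ\phi)$ as $\psi$ ranges over homomorphisms from $G$ to slender groups. The only cosmetic difference is that the paper intersects over all such $\psi$ while you select one separating $\psi_g$ per nontrivial $g\in G$, but the argument is the same.
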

\begin{proof}  
	In either case, the kernel of $\phi$ is the intersection of all $\ker(\psi \circ \phi)$ where $\psi$ is a homomorphism from $G$ to a slender group.  Each $\ker(\psi \circ \phi)$ is a clopen subgroup of $H$ and the result follows.
\end{proof}

\begin{proof}[Proof of Theorem \ref{residualslender}]  We note that part (1) was proved in \cite{Cor1} using different techniques, but we give a proof here as well for the sake of completeness.  Certainly any notion of slenderness implies the residual notion of slenderness.  Supposing $|G|<2^{\aleph_0}$ and $G$ is residually n-slender we suppose for contradiction that $G$ is not n-slender.  Then we have a map $\phi: \HEG \rightarrow G$ for which $\phi(\HEG^M) \neq\{1_G\}$ for all $M$.  By Lemma \ref{eventualconstant} (1) we select an $M\in \omega$ for which $M'\geq M$ implies $\phi(\HEG^{M'}) = \phi(\HEG^M)$.  Since $G$ is residually n-slender and $\phi(\HEG^M) \neq \{1_G\}$ there exists a nontrivial map $\psi: G \rightarrow L$ with $L$ an n-slender group and $\psi\upharpoonright \phi(\HEG^M)$ a nontrivial homomorphism.  But now $\psi \circ \phi$ is a nontrivial homomorphism from $\HEG$ to $L$ for which $\psi\circ \phi(\HEG^{M'})$ is never trivial for any $M'\in \omega$, a contradiction.  Thus we have finished the proof of (1), and the proof of (2) is entirely analogous.  It remains to prove the nontrivial direction of part (3).

Suppose $G$ is residually lcH-slender and let $\phi:H \rightarrow G$ be an abstract group homomorphism with $H$ a locally compact Hausdorff group.  The kernel $\ker(\phi)$ is closed by Lemma \ref{closedkernel}, and so $H/\ker(\phi)$ is easily locally compact Hausdorff and the map $\phi$ descends to an injection from $H/\ker(\phi)$ to $G$.  We shall show that $H/\ker(\phi)$ is discrete and we will be done.  Thus it suffices to show that if such a map $\phi: H\rightarrow G$ is injective then the domain is discrete.  

We notice that if $K \leq H$ is any compact subgroup then $\phi(K)$ must be trivial.  Indeed if $g\in \phi(K) \setminus \{1_G\}$ we take a homomorphism $\psi:G \rightarrow L$ with $L$ an lcH-slender group and $g \notin \ker(\psi)$.  Now $\psi \circ \phi\upharpoonright K$ has open kernel and the image is infinite (since it is a nontrivial subgroup of $L$ and $L$ is torsion-free).  But this gives an infinite open covering of $K$ by the cosets of $\ker(\psi \circ \phi\upharpoonright K)$, and this cover can certainly not have a finite subcover.  Since we are now assuming $\phi$ is injective, we now know that all compact subgroups of $H$ are trivial.

Now by Theorem \ref{Morris} we have $H$ is homeomorphic to a product $\mathbb{R}^n \times D$ with $D$ discrete.  Fix an element $y\in D$.  Since any homomorphism $\beth$ from $H$ to an lcH-slender group is continuous with clopen kernel, we know $\beth(\mathbb{R}^n \times \{y\})$ always maps to a single point.  Since $G$ is residually lcH-slender this means that $n = 0$, for otherwise we could not separate distinct elements of $\mathbb{R}^n \times \{y\}$ using a composition $\psi\circ \phi$ with $\psi: G \rightarrow L$ having lcH-slender codomain, and $\phi$ is assumed to be injective.  Thus $H$ is discrete and we are done with the proof of Theorem \ref{residualslender}.
\end{proof}

Notice that parts (1) and (2) of this theorem cannot be improved since the group $\prod_{\omega}\mathbb{Z}$ is residually n-, cm-, and lcH-slender but is neither n- nor cm-slender.  We now show that the finite generation of limit groups is essential in concluding n-, cm-, and lcH-slenderness.

\begin{theorem}\label{notsofast}  
	If $G$ is a nontrivial group there exists a countable group $H$ having the same first order theory as $G$ such that $H$ is not n-, cm-, or lcH-slender.
\end{theorem}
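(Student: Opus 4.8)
The plan is to avoid homomorphisms from $\HEG$ or from topological groups altogether, and instead to force the model $H$ to contain one of the two obstructions to slenderness recorded in the observation following Definition~\ref{nslender}: nontrivial torsion, or a copy of $\mathbb{Q}$. Since every n-, cm-, and lcH-slender group contains neither, it suffices to produce a countable $H$ with $H\equiv G$ that carries one of these two features. Both will be arranged by elementary means (compactness together with the downward L\"owenheim--Skolem theorem), the only delicate point being that the chosen obstruction must survive the passage to a \emph{countable} model.

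First I would split on torsion. If $G$ has an element of finite order, then for some fixed $n$ the sentence $\exists x\,(x\neq 1\wedge x^{n}=1)$ holds in $G$, hence in every group elementarily equivalent to $G$. Taking any countable $H\equiv G$ (a countable elementary substructure of $G$, or $G$ itself when $G$ is already countable) then yields a countable $H\equiv G$ with torsion, which is therefore not n-, cm-, or lcH-slender.

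The substantive case is $G$ torsion-free, where I would manufacture a coherent tower of roots. Mere infinite divisibility of a single element is not enough, since by Remark~\ref{Whyboth} there are slender groups with infinitely divisible elements; one must instead produce a whole compatible system. Introduce new constants $c_0,c_1,\dots$ and consider the theory $T=\mathrm{Th}(G)\cup\{c_{n+1}^{\,n+2}=c_n:n\in\omega\}\cup\{c_0\neq 1\}$. Each finite fragment of $T$ is satisfiable in $G$ itself: fixing $1\neq g\in G$ and setting $c_N=g$ and $c_n=c_{n+1}^{\,n+2}$ downward gives $c_0=g^{(N+1)!}\neq 1$ by torsion-freeness. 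Hence $T$ is consistent, and by downward L\"owenheim--Skolem it has a countable model whose reduct $H$ to the group language satisfies $H\equiv G$ and contains the tower $c_0,c_1,\dots$.

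It then remains to see that this tower generates a copy of $\mathbb{Q}$ in $H$. Torsion-freeness transfers from $G$ to $H$ (it is a set of first-order sentences), and each $c_i$ is a power of $c_j$ whenever $j\geq i$, so the $c_i$ pairwise commute and generate an abelian subgroup $A=\bigcup_k\langle c_k\rangle$ with each $\langle c_k\rangle\cong\mathbb{Z}$ and the inclusions $\langle c_k\rangle\hookrightarrow\langle c_{k+1}\rangle$ given by multiplication by $k+2$. Thus $A$ is a homomorphic image of $\varinjlim(\mathbb{Z}\xrightarrow{\times 2}\mathbb{Z}\xrightarrow{\times 3}\cdots)\cong\mathbb{Q}$; since $A\leq H$ is torsion-free and nontrivial (because $c_0\neq 1$) and every torsion-free nonzero quotient of $\mathbb{Q}$ is $\mathbb{Q}$, this image map is injective and $A\cong\mathbb{Q}$. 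Hence $H$ contains $\mathbb{Q}$ and is not n-, cm-, or lcH-slender, finishing the torsion-free case. The main obstacle I anticipate is precisely this last transfer-and-injectivity step: elementary equivalence and compactness deliver the divisibility relations but not the group $\mathbb{Q}$ itself, and it is torsion-freeness of the countable model $H$ that upgrades the coherent tower to a genuinely embedded copy of $\mathbb{Q}$.
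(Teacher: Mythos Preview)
Your proof is correct and follows essentially the same route as the paper's: split on torsion, and in the torsion-free case adjoin constants realizing a coherent tower of roots via compactness and L\"owenheim--Skolem, then observe the generated subgroup is $\mathbb{Q}$ because $H$ is torsion-free. The only differences are cosmetic---your indexing $c_{n+1}^{\,n+2}=c_n$ versus the paper's $c_m=c_{m+1}^{\,m+1}$, and your more explicit direct-limit justification for $A\cong\mathbb{Q}$ where the paper simply remarks that a nontrivial torsion-free group generated by such a tower must be $\mathbb{Q}$.
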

\begin{proof}  Suppose first that $G$ has torsion.  Let $H$ be any countable group with the same first order theory as $G$ (by the L\"owenheim-Skolem Theorem such a group exists).  Then $H$ also has torsion, and is therefore not n-, cm-, or lcH-slender.

Suppose now that $G$ is nontrivial and torsion-free.  Let $\Phi$ denote the first order theory of $G$ (including the conditions which define a group).  We extend the signature of groups to include infinitely many constants $\{c_m\}_{m\in \omega}$ and let $\Theta$ denote the set of formulas $\{c_{m}=c_{m+1}^{m+1}\}_{m\in \omega} \cup \{c_0 \neq 1\}$.  Notice that any finite subset of $\Phi \cup \Theta$ is satisfiable.  More particularly, since $G$ is torsion-free and nontrivial we have $G$ as a model of $\Phi \cup F$ where $F$ is any finite subset of $\Theta$.  By model theoretic compactness there exists a model of $\Phi \cup \Theta$ and since $\Phi \cup \Theta$ has countable signature there exists countable structure $H$ which satisfies $\Phi \cup \Theta$ (by the L\"owenheim-Skolem Theorem).  Then $H$ is a countable group with the same first order theory as $G$ which contains a subgroup $\langle \{c_m\}_{m\in \omega}\rangle$ which is isomorphic to $\mathbb{Q}$ (isomorphism follows from the fact that $\langle \{c_m\}_{m\in \omega}\rangle$ is a nontrivial group and $H$ is torsion-free).  Then $H$ is not n-, cm-, or lcH-slender.
\end{proof}

\end{section}

\end{document}